\newtheorem{thm}{Theorem}[section]
\newtheorem{lemma}[thm]{Lemma}
\newtheorem{cor}[thm]{Corollary}
\newtheorem*{cor*}{Corollary}
\newtheorem{prop}[thm]{Proposition}
\newtheorem*{conjecture*}{Conjecture}
\newtheorem*{RedLem*}{Reduction Lemma}
\theoremstyle{remark} 
\newtheorem*{question*}{Question}
\newtheorem{remark}[thm]{Remark}
\theoremstyle{definition} 
\newtheorem*{define*}{Definition}
\numberwithin{equation}{section}  
\newcommand{\OO}{\mathcal{O}}    
\newcommand{\FF}{\mathbb{F}}      
\newcommand{\F}{\mathbb{F}}      
\newcommand{\ZZ}{\mathbb{Z}}     
\newcommand{\Z}{\mathbb{Z}}     
\newcommand{\RR}{\mathbb{R}}     
\newcommand{\PP}{\mathbb{P}}      
\newcommand{\QQ}{\mathbb{Q}}      
\newcommand{\Q}{\mathbb{Q}}      
\newcommand{\CC}{\mathbb{C}}      
\newcommand{\frakp}{\mathfrak{p}}
\newcommand{\fraka}{\mathfrak{a}}
\newcommand{\Spec}{\operatorname{Spec}}
\newcommand{\ord}{\operatorname{ord}}
\newcommand{\Gal}{\operatorname{Gal}}  
\newcommand{\an}[1]{\operatorname{an}}  
\newcommand{\Aut}{\operatorname{Aut}}   
\newcommand{\Rat}{\operatorname{Rat}}    
\newcommand{\PGL}{\operatorname{PGL}}
\newcommand{\alg}[1]{{\overline{#1}}}
\newcommand{\GG}{\mathbb{G}}
\newcommand{\tth}{^{\operatorname{th}}}
\newcommand{\Berk}{\mathbf{P}}  
\newcommand{\Manoa}{M\=anoa}
\newcommand{\Hawaii}{Hawai\kern.05em`\kern.05em\relax i}
\newcommand{\red}{\mathrm{red}}  
\newcommand{\Res}{\mathrm{Res}}  
\newcommand{\Alg}{\text{-}\mathbf{Alg}}
\newcommand{\Grp}{\mathbf{Grp}}
\newcommand{\mat}[1]{\left( \begin{smallmatrix} #1 \end{smallmatrix} \right)}
\newcommand{\oo}{\mathfrak{o}}
\newcommand{\tab}{\hspace{.4cm}}
\newcommand{\Fix}{\operatorname{Fix}}
\newcommand{\FFF}{\overline{\mathbb{F}}}
\newcommand{\per}{\mathrm{per}}
\newcommand{\Conj}{\mathrm{Conj}}
\title[Computing conjugating sets]{Computing conjugating sets and automorphism groups of rational functions}
\author{Xander Faber}
\author{Michelle Manes}
\address{Department of Mathematics,
University of \Hawaii\ at \Manoa, 
Honolulu, HI}
\email{xander@math.hawaii.edu, mmanes@math.hawaii.edu}
\urladdr{http://www.math.hawaii.edu/\~{}xander/, http://www.math.hawaii.edu/\~{}mmanes/}
\author{Bianca Viray}
\address{Department of Mathematics,
Brown University,
Providence, RI}
\email{bviray@math.brown.edu}
\urladdr{http://math.brown.edu/\~{}bviray}
\date{\today}
\thanks{The first and second authors were partially supported by NSF grants DMS-0902532 and DMS-1102858, respectively.  The third author was partially supported by NSF grant DMS-1002933 and by ICERM}
\subjclass[2010]{37P05 (primary); 11Y16 (secondary) }
\begin{document}

	\begin{abstract}
		%
		%

%
Let $\phi$ and $\psi$ be endomorphisms of the projective line of degree at least~$2$, defined over a noetherian commutative ring $R$ with unity.  From a dynamical perspective, a significant question is to determine whether $\phi$ and $\psi$ are conjugate (or to answer the related question of whether a given map $\phi$ has a nontrivial automorphism).  We show that the space of automorphisms of~$\PP^1$ conjugating $\phi$ to $\psi$ is a finite subscheme of $\PGL_2$ (respectively that the
automorphism group of~$\phi$ is a finite group scheme).  

We construct efficient algorithms for computing the set of conjugating maps (resp. the group of automorphisms) when $R$ is a field.  Each of our algorithms takes advantage of different dynamical structures, so context (e.g., field of definition and degree of the map) determines the preferred algorithm.
We have implemented them in Sage when $R$ is a finite field or the field of rational numbers, and 
we give running times for computing automorphism groups for hundreds of random endomorphisms of $\PP^1$. 
These examples  demonstrate the superiority of these new algorithms over a na\"ive approach using Gr\"obner bases.  
\end{abstract}

	\maketitle

	\section{Introduction}

	Let $F$ be a field, and let $\phi \in F(z)$ be a rational function.  Let $f,g\in F[z]$ be relatively prime polynomials with $\phi = f/g$.  Unless otherwise specified, we assume throughout that $d = \deg(\phi) := \max\{\deg(f), \deg(g)\}$ is at least~$2$.  When viewed as an endomorphism of the projective line $\PP^1_F \stackrel{\phi}{\to} \PP^1_F$, a dynamical theory of $\phi$ arises from iteration. That is, for $x \in \PP^1(F)$, we may consider its orbit
		\[
			x\mapsto \phi(x)\mapsto \phi^2(x)\mapsto \phi^3(x)\mapsto\cdots
		\]
	(Here we write $\phi^1 = \phi$ and $\phi^n = \phi \circ \phi^{n-1}$ for each $n > 1$.) The case $F = \CC$ --- dynamics of self-maps of the Riemann sphere --- has a fascinating history dating back as far as Newton \cite{Alexander_Dynamics_History, Milnor_Dynamics_Book_2000}. When $F$ is a finite field, these dynamical systems behave (conjecturally) like random maps, which has applications to factoring integers \cite{Pollard_rho_1975, Bach_Pollard_1991}. When $F$ is a number field, then we have the younger theory of arithmetic dynamics \cite{Silverman_Dynamics_Book_2007}. The case where $F$ is a non-Archimedean field is younger still and draws much inspiration from the complex case \cite{Baker-Rumely_BerkBook_2010, Jonsson_Berkovich_Notes}. 

	In the present paper, we study the following pair of algorithmic problems:
	\begin{enumerate}
		\item\label{Problem 1} for two rational functions $\phi$ and $\psi$, determine the set of rational functions $s$ of degree~$1$ (automorphisms of $\PP^1$)
			that conjugate $\phi$ to $\psi$,  i.e., such that
			 $s\circ \phi = \psi\circ s$.
		\item\label{Problem 2} for a given rational function $\phi$, determine the \textbf{automorphism group of $\phi$}; 
			i.e., determine the set of rational functions $s$ of degree $1$ such that $s\circ \phi \circ s^{-1} = \phi$.
	 \end{enumerate}

	If two rational functions $\phi, \psi \in F(z)$ are conjugate, then they exhibit the same \textit{geometric} dynamical behavior.  Indeed, if $\alg{F}$ is an algebraic closure of $F$ and $s \in \alg{F}(z)$ conjugates $\phi$ to $\psi$, then $s$ maps the $\phi$-orbit of a point $x \in \PP^1(\alg{F})$ to the $\psi$-orbit of $s(x)$.  Conversely, given two functions that seemingly exhibit the same geometric dynamical behavior, one wants to know if they are conjugate, or if there is some deeper structure that should be investigated.

	We say that $\phi$ and $\psi$ are conjugate over a field extension $E/F$ if they satisfy the relation $s \circ \phi = \psi \circ s$ for some rational function $s \in E(z)$ of degree~1. In this case, they have the same \emph{arithmetic} dynamical behavior over $E$; e.g., $s$ maps $\phi$-orbits of $E$-rational points to $\psi$-orbits of $E$-rational points, and the field extension of $E$ generated by the period-$n$ points of $\phi$ and $\psi$ must agree for every $n\geq 1$. The automorphism group of $\phi$ (or $\psi$) bounds the size of the field extension generated by the coefficients of any conjugating map \cite{LMT_Bounds_for_Twists_2012}. As a special case,  Galois cohomology shows that  conjugacy over $F$ and conjugacy over an algebraic closure $\alg{F}$ are equivalent notions whenever $\phi$ and $\psi$ have trivial automorphism group (over $\alg{F}$). 		

	The symmetry locus of rational functions (i.e., the space of rational functions with non-trivial automorphism group) can be thought of as an analogue of the locus of abelian varieties that have extra automorphisms.  Indeed, just as the presence of elliptic curves with extra automorphisms obstructs the existence of a universal elliptic curve, so does the symmetry locus obstruct the existence of a fine moduli space of conjugacy classes of rational functions.

	It is worth noting that modern research in dynamics often focuses on low degree, with an abundance of open questions even in degrees~$2$ (see, for example, \cite{Milnor_Rational_Maps_1993, Poonen_Preperiodic_Classification_1998}) and~$3$ (see~\cite{Milnor_Cubics_2009}), and virtually nothing at all known in degrees greater than $3$.  So even for degrees  smaller than $10$, there is ample room for generating, testing, and refining new conjectures, and we believe the tools presented here will be useful in this regard.  

	In Sections~\ref{Sec: Red Lemma}--\ref{Sec: Conjugate}, we define two functors $\underline{\Conj}_{\phi, \psi}$ and $\underline{\Aut}_\phi$ and show that they are represented by schemes $\Conj_{\phi,\psi}$ and $\Aut_{\phi}$; the points of $\Conj_{\phi,\psi}$ and $\Aut_{\phi}$ are precisely the set of rational functions of degree~1 that conjugate $\phi$ to $\psi$ and the automorphism group of $\phi$, respectively. To provide solutions to problems \eqref{Problem 1} and \eqref{Problem 2} above in the case of number fields, we will work with conjugating sets and automorphism groups over global fields and their reductions at several primes.

In the remainder of the introduction, we describe some of our results for the scheme $\Aut_\phi$ and indicate how they will give rise to algorithms for computing its field-valued points; the analogous results for $\Conj_{\phi,\psi}$ are stated in~\S\ref{Sec: Conjugate}. For notation, let $R$ be a noetherian commutative ring with unity, and let $R\Alg$ and $\Grp$ denote the categories of commutative $R$-algebras and (arbitrary) groups, respectively. For any $R$-algebra $S$, we identify $\PGL_2(S)$ with $\Aut(\PP^1_S)$, the group of automorphisms of $\PP^1$ defined over $S$. We make the following definition:

	\begin{define*}
		Let $\phi: \PP^1_R \to \PP^1_R$ be a nonconstant morphism. Let $\underline{\Aut}_\phi$ denote the functor from $R\Alg \to \Grp$ that sends 
		\[
			\underline{\Aut}_\phi\colon S \mapsto \{f \in \Aut(\PP^1_S) : 
			\phi = \phi^f := f \circ \phi \circ f^{-1} \}. 
		\]
	\end{define*}

	If $k$ is a non-Archimedean field (not necessarily complete) with valuation ring $\oo$, we say that an endomorphism $\phi: \PP^1_k \to \PP^1_k$ has \textbf{good reduction} if there exists a morphism $\Phi: \PP^1_\oo \to \PP^1_\oo$ that agrees with $\phi$ on the generic fiber.  In \S\ref{Sec: Red Lemma} we prove the following.

		\begin{RedLem*}
			Let $k$ be a non-Archimedean field with valuation ring $\oo$ and residue field $\FF$, 
			and let $\phi \in k(z)$ be a rational function of degree at least~$2$ \textup(which is equivalent to 
			a morphism $\PP^1_k \to \PP^1_k$\textup).
			Suppose that $\phi$ has good reduction. Then every element of 
			$\underline{\Aut}_\phi(k)$ has good reduction, and the canonical reduction $\oo \to \FF$ induces 
			 a homomorphism $\red : \underline{\Aut}_\phi(k) \to \underline{\Aut}_\phi(\FF)$. If $\FF$ has characteristic $p > 0$ 
			 \textup(resp. characteristic zero\textup), then the kernel of reduction is a $p$-group \textup(resp. 
			 trivial\textup). 
		\end{RedLem*}

	We use this result to deduce that $\underline{\Aut}_\phi$ is representable and, when $\phi$ has degree at least~2, that the representing scheme is proper.  More generally, in \S\ref{Sec: Aut Properties} we prove:
	\begin{thm}	\label{Thm: Finite group scheme}
		Let $R$ be a commutative ring and let $\phi: \PP^1_R \to \PP^1_R$ be a nonconstant endomorphism. Then the functor $\underline{\Aut}_\phi$ is represented by a closed $R$-subgroup scheme $\Aut_\phi \subset \PGL_2$. If moreover $\deg(\phi) \geq 2$, then $\Aut_\phi$ is finite over $\Spec R$. 
	\end{thm}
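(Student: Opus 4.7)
The plan is to exhibit $\Aut_\phi$ as the scheme-theoretic stabilizer of $\phi$ under the conjugation action of $\PGL_2$ on the scheme of endomorphisms of $\PP^1$, and then to deduce the finer properties (properness and finiteness) using the Reduction Lemma together with standard facts about proper affine schemes.

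For representability, I would use the standard fact that the functor of $R$-morphisms $\PP^1_S \to \PP^1_S$ of bounded degree is representable by an open subscheme of a projective space over $R$ (the complement of the resultant locus inside an appropriate $\PP^{2d+1}_R$); in particular, this Hom-scheme is separated over $R$. Consider the two morphisms $\PGL_{2,R} \rightrightarrows \Hom_R(\PP^1,\PP^1)$ sending a functorial $S$-point $f$ to $f \circ \phi_S$ and to $\phi_S \circ f$ respectively. Their equalizer represents $\underline{\Aut}_\phi$ on $R$-algebras (since $f \circ \phi = \phi \circ f$ is equivalent to $\phi = f \phi f^{-1}$), and because the target is separated, this equalizer is a closed subscheme of $\PGL_2$. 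It is automatically a subgroup scheme because the defining relation is compatible with the group structure on $\PGL_2$.

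Now assume $\deg(\phi) \geq 2$. For properness of $\Aut_\phi \to \Spec R$, I would invoke the valuative criterion. Given a DVR $V$ with fraction field $K$ and a commutative diagram with $\Spec V \to \Spec R$ and $\Spec K \to \Aut_\phi$, the base change $\phi_V : \PP^1_V \to \PP^1_V$ furnishes an integral model of $\phi_K$, so $\phi_K$ has good reduction in the sense of the Reduction Lemma. That lemma then guarantees that the given $K$-point of $\Aut_\phi$, regarded as an element of $\PGL_2(K)$, itself has good reduction and so extends to $\PGL_2(V)$; separatedness of the Hom-scheme propagates the commutation relation with $\phi$ from the generic fiber to all of $\Spec V$, so the extension lands in $\Aut_\phi(V)$, as required.

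Finally, for finiteness: each fiber $\Aut_\phi \otimes_R k(\pp)$ is a closed subscheme of $\PGL_{2, k(\pp)}$ and hence affine and of finite type, and it is proper over $k(\pp)$ by base change from the preceding step. A proper affine scheme of finite type over a field is finite, so all fibers are finite. A proper morphism with finite fibers is finite, which completes the argument. The main obstacle is the properness step: this is where the hypothesis $\deg(\phi) \geq 2$ enters substantively and where the Reduction Lemma does all the real work; the representability reduction is essentially formal once the correct ambient scheme is identified, and the passage from proper-plus-quasi-finite to finite is standard.
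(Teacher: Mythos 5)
Your proof follows the same outline as the paper's (cut out $\Aut_\phi$ as a closed subscheme of $\PGL_2$, establish properness through the valuative criterion together with the Reduction Lemma, and then conclude finiteness from properness plus quasi-finiteness), but it diverges in two places. For representability you use the equalizer formalism in the separated Hom-scheme of degree-$d$ endomorphisms, whereas the paper writes out the defining equations explicitly as $2d+1$ homogeneous relations in the matrix entries $\alpha,\beta,\gamma,\delta$; this is a stylistic choice and yields the same closed subscheme. The more substantive divergence is in how you obtain quasi-finiteness. The paper establishes quasi-finiteness first, by citing Silverman's result that $\Aut_\phi(L)$ is finite for any algebraically closed field $L$, and then invokes Zariski's main theorem to pass from quasi-finite plus proper to finite. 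You instead use only properness and the affineness of $\PGL_2$: each geometric fiber is proper and affine of finite type over a field, hence finite, so the morphism is proper with finite fibers and therefore finite. Your route is attractive because it never needs a separate argument that $\Aut_\phi$ has finitely many points over an algebraically closed field --- the finiteness of $\Aut_\phi(\alg{F})$ comes out as a \emph{corollary} of the Reduction Lemma rather than as an input. Both routes ultimately rest on the same deep fact (a proper, finite-type morphism of noetherian schemes with finite fibers is finite, i.e.\ Zariski's main theorem), so the underlying machinery is identical, but your bookkeeping is slightly more self-contained.
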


	\begin{remark}
		The group scheme $\Aut_\phi$ need not be flat over $\Spec R$. For example, if $\phi(z) = z^2$ as an endomorphism of the projective line over $\ZZ_2$, then there is nontrivial $2$-torsion in the ring of global functions of $\Aut_\phi$. Intuitively, this is because $\Aut_\phi(\alg{\QQ_2}) = \{z, 1/z\} $, while $\Aut_\phi(\FFF_2) \cong \PGL_2(\FF_2)$, which has order~6. 
	\end{remark}

	\begin{remark}
		If $\deg(\phi) = 1$, then $\Aut_\phi$ is not a finite group scheme in general. Consider the examples $\phi(z) = z$ and $\psi(z) = z + 1$, for which $\Aut_\phi = \PGL_2$ and $\Aut_\psi \cong \GG_a$, respectively.
	\end{remark}

	If $K$ is a number field and $v$ is a finite place of $K$, we write $K_v$ and $\FF_v$ for the completion of $K$ at $v$ and the residue field of $K_v$, respectively. If $\phi \in K(z)$ is a rational function, we say that it has \textbf{good reduction at $v$} if the induced rational function over $K_v$ has good reduction in the above sense. 
(Equivalently, $\phi$ has good reduction at $v$ if one can reduce its coefficients modulo $v$, and the resulting endmorphism of $\PP^1_{\FF_v}$ has the same degree as $\phi$.)  Another application of the Reduction Lemma proved in Section~\ref{Sec: Red Lemma} gives us an injectivity statement away from finitely many places of $K$:

	\begin{prop}\label{prop:injectivity}
		Let $K$ be a number field and let $\phi \in K(z)$ be a rational function of degree $d \geq 2$. Define $S_0$ to be the set of rational primes given by 
		 \[
		 	S_0 = \{2\} \cup 
				\left\{p \text{ odd}: \frac{p - 1}{2} \Big| [K : \QQ] \ \text{ and } \ p \mid d(d^2 - 1) \right\},
		\]
		and let $S$ be the \textup(finite\textup) set of places of $K$ of bad reduction for $\phi$ along with the places that divide a prime in $S_0$. Then $\red_v: \Aut_{\phi}(K) \to \Aut_{\phi}(\FF_v)$ is a well defined injective homomorphism for all places $v$ outside $S$. 
	\end{prop}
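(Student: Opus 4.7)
The plan is to apply the Reduction Lemma and then constrain which primes $p$ can arise as orders of torsion in $\Aut_\phi(K)$.  For every place $v$ of good reduction for $\phi$, the Reduction Lemma (applied over $K_v$) guarantees that every $s \in \Aut_\phi(K) \subseteq \Aut_\phi(K_v)$ has good reduction, so that $\red_v \colon \Aut_\phi(K) \to \Aut_\phi(\FF_v)$ is a well-defined homomorphism whose kernel is a $p$-group, with $p$ the residue characteristic at $v$.  For $v \notin S$ we have $p \neq 2$ (since $2 \in S_0$), and every nontrivial $p$-torsion element of the kernel has a power of exact order $p$.  It therefore suffices to show: \emph{if $\Aut_\phi(K)$ contains an element of odd prime order $p$, then $p \in S_0$, i.e., $(p-1)/2 \mid [K:\QQ]$ and $p \mid d(d^2-1)$.}

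Fix such an $f \in \Aut_\phi(K)$ of order $p$ and lift it to a matrix $M \in \GL_2(K)$.  For the first divisibility, the eigenvalues $\lambda_1, \lambda_2 \in \overline{K}^\times$ of $M$ satisfy $\lambda_1/\lambda_2 = \zeta_p$, a primitive $p$-th root of unity, since $f$ has exact order $p$ in $\PGL_2$.  The identity
\[
\zeta_p + \zeta_p^{-1} \;=\; \frac{\lambda_1}{\lambda_2} + \frac{\lambda_2}{\lambda_1} \;=\; \frac{\operatorname{tr}(M)^2}{\det(M)} - 2
\]
exhibits $\zeta_p + \zeta_p^{-1}$ as an element of $K$.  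Since $\QQ(\zeta_p + \zeta_p^{-1})$ is the maximal totally real subfield of $\QQ(\zeta_p)$, of degree $(p-1)/2$ over $\QQ$, we obtain $(p-1)/2 \mid [K:\QQ]$.

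For the second divisibility, the commutation $\phi \circ f = f \circ \phi$ forces $\phi$ to permute the two-point fixed set $\{P_1, P_2\} \subset \PP^1(\overline K)$ of $f$.  If $\phi$ fixes both $P_i$, change coordinates over $\overline K$ to put $P_1 = 0$, $P_2 = \infty$, and $f(z) = \zeta_p z$; then $\phi(\zeta_p z) = \zeta_p \phi(z)$ forces $\phi(z) = z \cdot h(z^p)$ for some $h \in \overline K(u)$, and because $\phi(\infty) = \infty$ a short calculation gives $d = \deg(\phi) = 1 + p \cdot \deg(h)$, so $p \mid d - 1$.  If instead $\phi$ swaps $P_1$ and $P_2$, then $\phi^2$ fixes both points and still commutes with $f$, so the same argument applied to $\phi^2$ (which has degree $d^2$) yields $p \mid d^2 - 1$.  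In either case $p \mid d(d^2-1)$, completing the required implication and hence the injectivity statement.

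The principal (though modest) obstacle is the degree bookkeeping in the ``$\phi$ fixes both $P_i$'' case: writing $h = H_1/H_2$ in lowest terms, one must verify that the presentation $\phi(z) = zH_1(z^p)/H_2(z^p)$ has no common factor, which reduces to checking $H_2(0) \neq 0$.  This follows from the assumption $\phi(0) = 0$ by inspecting the order of vanishing of $\phi$ at the origin; once this is in hand, the rest of the argument is pure assembly of the Reduction Lemma with the two elementary divisibility facts above.
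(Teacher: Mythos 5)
Your overall architecture matches the paper's: reduce to the assertion that $\Aut_\phi(K)$ has no element of odd prime order $p$ unless $p \in S_0$, then establish the two divisibilities $(p-1)/2 \mid [K:\QQ]$ and $p \mid d(d^2-1)$. For the first divisibility, the paper simply cites Beauville's criterion that $\PGL_2(K)$ has an order-$p$ element iff $\zeta_p + \zeta_p^{-1} \in K$; your trace-and-determinant computation $\zeta_p + \zeta_p^{-1} = \operatorname{tr}(M)^2/\det(M) - 2 \in K$ is a clean, self-contained replacement for that citation and is a genuine (modest) improvement in that respect. For the second divisibility, the paper invokes its Proposition on geometric bounds, whose proof shows only that $\phi(z)/z$ is invariant under $z \mapsto \zeta_p z$ and reads off $\deg\phi \in \{p\deg\psi,\, p\deg\psi \pm 1\}$ without analyzing how $\phi$ acts on $\{0,\infty\}$; your approach instead case-splits on that action.

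There is a gap in the case-split. You write that commutation ``forces $\phi$ to \emph{permute}'' the two fixed points of $f$, but commutation only forces $\phi$ to map $\Fix(f)$ into itself, and $\phi$ is not invertible. The omitted possibility $\phi(P_1) = \phi(P_2)$ genuinely occurs: for example, $\phi(z) = (z^3-1)/z^2$ commutes with $s(z) = \zeta_3 z$, whose two fixed points $0$ and $\infty$ are both sent to $\infty$. In coordinates this case corresponds to $\phi(z) = z\,H_1(z^p)/H_2(z^p)$ with $H_2(0)=0$ but still $\phi(\infty)=\infty$, which after cancelling the common factor gives $\deg\phi = p\deg H_1$, hence $p \mid d$. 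That conclusion is still absorbed by $p \mid d(d^2-1)$, so your final statement survives, but the argument as written is incomplete. You should either add the third case (and, symmetrically, $\phi(P_1)=\phi(P_2)=P_1$) with the computation $p\mid d$, or avoid the split entirely as the paper does by writing $\phi(z)/z = \psi(z^p)$ and bounding $\deg\phi$ as $p\deg\psi$ or $p\deg\psi \pm 1$ in one stroke. Finally, note that the paper's geometric-bound proposition also treats the one-fixed-point case of $s$ (relevant in positive characteristic); in your setting $p$ is odd, the characteristic is $0$, and the lift $M$ satisfies a separable polynomial $x^p - c$, so $f$ is forced to have two distinct fixed points --- a point worth stating explicitly so the reader does not worry about the parabolic case.
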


	Proposition~\ref{prop:injectivity} often allows one to determine the group structure of $\Aut_\phi(K)$ very quickly by computing $\Aut_\phi(\FF_v)$ for a few places $v \not\in S$.  This is analogous to the way one typically computes the torsion subgroup of an elliptic curve over a number field; see \cite[VII.3]{Silverman_AEC_2009}.  If one wishes to compute the \emph{elements} of $\Aut_{\phi}(K)$ rather than just the group structure, then more work is required. 

	In Section~\ref{Sec: Conjugate}, we define the scheme $\Conj_{\phi, \psi}$ and prove modified versions of the Reduction Lemma, Theorem~\ref{Thm: Finite group scheme}, and Proposition~\ref{prop:injectivity} in this setting.

	In \S\ref{Sec: GeneralAlgorithms}, we give algorithms that can be used to compute the field-valued points of both $\Aut_{\phi}$ and $\Conj_{\phi,\psi}$. By way of a disclaimer, we have attempted to stress concept and clarity in our algorithms; we have not endeavored to explain all of the small tricks we used at the level of implementation.  This is especially true in Algorithm~\ref{Alg:NumberField}.  We refer the interested reader to our source code which is included with the \texttt{arXiv} distribution of this article.

	The first new algorithm that we develop, the method of invariant sets, computes the absolute automorphism group or absolute conjugating set.  We explain how a slight modification gives a method to compute the automorphism group or conjugating set defined over a fixed finite ground field.  In~\S\ref{Sec: Number Fields}, we prove that the height of the elements in $\Conj_{\phi, \psi}$ are bounded in terms of the coefficients of $\phi$ and $\psi$.  This allows us to develop a Chinese Remainder Theorem (CRT) algorithm to compute the $K$-rational points of $\Conj_{\phi,\psi}$ and $\Aut_{\phi}$ when $K$ is a number field. In principal, the CRT algorithm is exactly the same regardless of whether one is computing $\Aut_{\phi}(K)$ or $\Conj_{\phi, \psi}(K)$; however, in practice, each admits several distinct ways to increase efficiency.  See the end of \S\ref{Sec: Number Fields} for some examples in the case of computing automorphism groups and \S\ref{sec: early termination} for the case of ~$\Conj_{\phi, \psi}$.

	One important difference between the sets $\Aut_{\phi}(F)$ and $\Conj_{\phi, \psi}(F)$ is that the action of $\phi$ on the fixed points of $s\in\Aut_{\phi}(F)$ is highly restricted, both geometrically and arithmetically.  To the best of our knowledge, no analogous fact holds for the action of $\phi$ or $\psi$ on the fixed points of $s\in\Conj_{\phi, \psi}(F)$.  We exploit this restriction on the fixed points of $s\in\Aut_{\phi}(F)$ to develop another algorithm for computing $\Aut_{\phi}(F)$: the method of fixed points; this is described in \S\ref{Sec: AutAlgorithms}

	In~\S\ref{Sec: Examples} we compare the running times of the algorithms for computing $\Aut_{\phi}(\QQ)$ on rational functions with non-trivial automorphism group and on a large number of randomly generated rational functions of various degrees and various heights.  The running times demonstrate that the method of fixed points is preferable to the CRT method for rational functions of degree up to about 12, when the two methods become comparable.  For larger degrees the CRT method is preferable.  For the sake of completeness, we also compare our algorithms to the ``na\"ive'' algorithm using Gr\"obner bases.  Our experiments show that the Gr\"obner basis method is comparable with the fixed point method when the degree is two or three, but quickly becomes impractical.  

	\section*{Acknowledgements}

		This project began at the University of Georgia, during an NSF--sponsored summer school on Arithmetic Dynamics.  We thank the organizer, Robert Rumely, for the experience. The authors are grateful for the opportunity to complete the project at the Institute for Computational and Experimental Research in Mathematics. We thank the anonymous referees of
an earlier draft for their  comments,
 and give special thanks to Joseph H. Silverman for helpful comments on our number field algorithm. 

	\section{Proof of the Reduction Lemma}\label{Sec: Red Lemma}

		For background on the Berkovich projective line and dynamics, see~\cite{Baker-Rumely_BerkBook_2010}.  For a more concise summary of the ideas used  in this section, we direct the reader to \cite{Faber_Berk_RamI_2011}. In this section we write $\Aut_\phi(F)$ instead of $\underline{\Aut}_\phi(F)$, an abuse of notation which will be properly justified when we prove that the scheme $\Aut_\phi$ exists in the next section.

		Let $\CC_k$ be the completion of an algebraic closure of the completion of $k$, and let $\Berk^1$ be the Berkovich analytification of the projective line $\PP^1_{\CC_k}$. The morphism $\phi$ extends functorially to~$\Berk^1$. We use two key facts due to Rivera-Letelier \cite[Thm.~4]{Rivera-Letelier_Periodic_Points_2005}: 
		\begin{enumerate}
			\item a rational function $f$ has good reduction if and only if the Gauss point $\zeta \in \Berk^1$ is totally invariant; i.e., $f^{-1}(\zeta) = \{\zeta\}$, and 
			\item a rational function of degree at least~2 has at most one totally invariant point in $\Berk^1 \smallsetminus \PP^1(\CC_k)$.  
	 	\end{enumerate}

		For $f \in \Aut_\phi(k)$, we have
		\begin{align*}
			f^{-1}(\zeta) = f^{-1} (\phi^{-1}(\zeta))  
			= (\phi \circ f)^{-1}(\zeta) = 
			(f \circ \phi)^{-1}(\zeta) = \phi^{-1}(f^{-1}(\zeta)). 
		\end{align*}
		Hence $f^{-1}(\zeta)$ is a totally invariant type~II point for $\phi$, so that $f(\zeta) = \zeta$. Equivalently, $f$ has good reduction. Thus the reduction map $\red: \Aut_\phi(k) \to \Aut_\phi(\FF)$ is well-defined, and it is evidently a homomorphism.

		Now we compute the kernel of reduction. Suppose $\red(f)$ is trivial. Without loss of generality, we may replace $k$ with a finite extension in order to assume that $f$ has a $k$-rational fixed point. Moreover, we may 
	conjugate $f$ by an element of $\PGL_2(\oo)$ in order to assume that $f(\infty) = \infty$. Now $f(z) = \alpha z + \beta$. If $f$ has order $m > 1$, then the equation $f^m(z) = f(z)$ shows that $\alpha$ is an $m$-th root of unity. But $\red(f)$ is trivial, so we have $\tilde \alpha = 1$. If $k$ has residue characteristic zero, then we conclude that $\alpha = 1$ and $\beta = 0$. Otherwise, we find that $\alpha$ is a $p$-power root of unity in $k$, and hence $f$ has $p$-power order in $\Aut_\phi(k)$. The proof of the Reduction Lemma is complete.

	\begin{remark}
		A different proof of the first part of the Reduction Lemma can be given using the maximum modulus principle in non-Archimedean analysis \cite[Lem.~6]{Petsche_Szpiro_Tepper_2009}. 
	\end{remark}

	\begin{prop}\label{Prop: Geometric Bound}
		Let $F$ be a field, and let $n \geq 2$ be an integer. Suppose that $\phi: \PP^1_F \to \PP^1_F$ is a morphism of degree $d \geq 2$ such that $\Aut_\phi(F)$ contains an element of order $n$. Then $n$ divides one of $d$, $d - 1$, or $d + 1$. 
	\end{prop}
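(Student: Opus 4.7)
The plan is to base-change to $\bar F$ (which preserves both the order $n$ of $f$ and the degree $d$ of $\phi$) and conjugate $f$ into a normal form, conjugating $\phi$ simultaneously. Every finite-order element of $\PGL_2(\bar F)$ is either \emph{semisimple}---conjugate to $z \mapsto \zeta z$ for a primitive $n$-th root of unity $\zeta$, necessarily with $\gcd(n,\operatorname{char} F) = 1$---or \emph{unipotent}---conjugate to $z \mapsto z+1$ with $n = \operatorname{char} F = p > 0$; there is no mixed possibility, since in $\PGL_2$ a nontrivial semisimple element cannot commute with a nontrivial unipotent one. I treat the two cases separately.

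In the semisimple case $f(z) = \zeta z$, the set $\operatorname{Fix}(f) = \{0,\infty\}$ is stable under $\phi$, since $f(\phi(p)) = \phi(f(p)) = \phi(p)$ for $p \in \operatorname{Fix}(f)$; thus $\phi(\{0,\infty\}) \subseteq \{0,\infty\}$. Expanding $\phi(z) = \sum c_i z^i$ as a Laurent series at $0$, the commutation relation $\phi(\zeta z) = \zeta\,\phi(z)$ gives $c_i(\zeta^{i-1} - 1) = 0$, so every nonzero coefficient has index $i \equiv 1 \pmod n$. Consequently the local multiplicity of $\phi$ at $0$ is $\equiv 1 \pmod n$ when $\phi(0) = 0$, and is $\equiv -1 \pmod n$ (as the negative of the pole order) when $\phi(0) = \infty$. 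The involution $z \mapsto 1/z$ transports $f$ to $\zeta^{-1} z$ and exchanges the roles of $0$ and $\infty$, yielding the analogous congruences at the point $\infty$.

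Now I count fibers: for any $f$-fixed point $y \in \{0,\infty\}$, the fiber $\phi^{-1}(y)$ is $\langle f\rangle$-invariant, and $\langle f\rangle$ acts freely on $\PP^1 \smallsetminus \{0,\infty\}$ with orbits of size $n$. Hence $n$ divides $d - \sum_p e_p$, where the sum runs over the preimages $p \in \{0,\infty\}$ of $y$ with their local multiplicities $e_p$. Splitting according to the action of $\phi$ on $\{0,\infty\}$ gives four sub-cases---fixing both, swapping both, or collapsing both to $0$ or to $\infty$---in which the preceding congruences force $\sum_p e_p \equiv 1$, $-1$, $0$, or $0 \pmod n$ respectively. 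Therefore $n$ divides one of $d - 1$, $d + 1$, or $d$.

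In the unipotent case $f(z) = z+1$, the only $f$-fixed point is $\infty$, so $\phi(\infty) = \infty$ and $\phi$ is a polynomial of degree $d$ with leading coefficient $a_d \neq 0$. The identity $\phi(z+1) - \phi(z) = 1$ forces the coefficient of $z^{d-1}$ on the left-hand side to vanish, yielding $d\cdot a_d = 0$; since $a_d \neq 0$ this gives $p = n \mid d$, completing the proof. I expect the main subtlety to be the bookkeeping in the two collapsing sub-cases of the semisimple analysis, in which the local multiplicities at $0$ and $\infty$ carry residues of opposite sign modulo $n$ that must sum to $0$ in order to deliver $n \mid d$.
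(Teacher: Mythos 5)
Your semisimple case is correct, and it genuinely differs from the paper's argument: where the paper observes that $\phi(z)/z$ is invariant under $z \mapsto \alpha z$ and hence factors as $\psi(z^n)$, you instead read off congruences on local multiplicities at $0$ and $\infty$ from the Laurent expansion and then do an $\langle f\rangle$-orbit count on a fiber $\phi^{-1}(y)$. Both get the same split into sub-cases according to how $\phi$ acts on $\{0,\infty\}$; your version is more hands-on but requires the careful multiplicity bookkeeping you flag. That part holds up.

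The unipotent case, however, has a genuine gap, and it is not merely cosmetic. From $\phi(\infty)=\infty$ you conclude that ``\,$\phi$ is a polynomial of degree $d$,'' but $\phi(\infty)=\infty$ only says $\infty$ is a \emph{fixed point} of $\phi$; for $\phi$ to be a polynomial you would need $\infty$ to be \emph{totally invariant}, i.e.\ $\phi^{-1}(\infty)=\{\infty\}$, which does not follow. A concrete counterexample in characteristic $2$: take
\[
\phi(z) \;=\; z + \frac{1}{z^2 - z} \;=\; \frac{z^3 + z^2 + 1}{z^2 + z} \in \overline{\FF}_2(z).
\]
Here $\phi(z+1) = (z+1) + \frac{1}{(z+1)^2 - (z+1)} = \phi(z) + 1$, so $s(z)=z+1$ is an order-$2$ automorphism of $\phi$; $\phi(\infty)=\infty$; but $\phi$ is not a polynomial, $\deg\phi = 3$, and $n=2$ divides $d-1=2$ rather than $d=3$. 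So your concluding claim ``\,$p = n \mid d$\,'' in the unipotent case is false, not just under-justified. The missing possibility is exactly the case where the multiplicity of $\phi$ at $\infty$ equals~$1$: your $z^{d-1}$-coefficient trick detects $p \mid e_\infty$ only when $e_\infty \geq 2$, since for $e_\infty=1$ the coefficient you annihilate is the constant term, which is allowed to equal~$1$. The paper sidesteps this entirely by noting that $\phi(z)-z$ is translation-invariant and hence factors through $z \mapsto z^p - z$, so $\phi(z)-z = \psi(z^p - z)$ for some rational $\psi$, giving $d = p\deg\psi$ or $p\deg\psi + 1$. To repair your version, you would need to run an orbit count on $\phi^{-1}(\infty)$ (translations act freely on $\mathbb{A}^1$, so $d \equiv e_\infty \pmod p$) and then argue separately that either $e_\infty = 1$ or $p \mid e_\infty$.
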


	\begin{proof}
		We may assume without loss of generality that $F$ is algebraically closed. Let $s \in \Aut_\phi(F)$ have order~$n$. We conjugate one of the fixed points of $s$ to $\infty$, so that $s = \mat{ \alpha & \beta \\ & 1}$. (Note that replacing $s$ with $usu^{-1}$ has the effect of replacing $\phi$ with $u  \phi  u^{-1}$.) The proof divides into two cases, depending on whether $s$ has one or two fixed points. 

		If $s$ has only one fixed point, then necessarily $n = \mathrm{char}(F)$ is prime and $\alpha = 1$. (See, for example, \cite[Lem.~3.1]{PGL(2)_subgroups}.) Replace $s$ with $\mat{ \beta^{-1} & \\ & 1} s \mat{ \beta & \\ & 1}$ in order to assume that $\beta = 1$. It follows that $\phi(z+1) - 1 = \phi(z)$, or equivalently, that the function $\phi(z) - z$ is invariant under the map $z \mapsto z + 1$. Hence there exists a rational function $\psi(z) \in F(z)$ such that $\phi(z) - z = \psi(z^n - z)$. We conclude that $\deg(\phi) = n \cdot \deg(\psi)$ or $n \cdot \deg(\psi) + 1$. 

		Now suppose that $s$ has two distinct fixed points: $\infty$ and $\beta / (1 - \alpha)$. We may conjugate the second fixed point to $0$ in order to assume that $\beta = 0$. Note that this implies that $\alpha \in F^\times$ has multiplicative order~$n$. To say that $s$ is an automorphism of $\phi$ is equivalent to saying that $\phi(z) / z$ is invariant under the map $z \mapsto \alpha z$. Hence there is a rational function $\psi \in F(z)$ such that $\phi(z)/z =\psi(z^n)$. So $\deg(\phi) = n \cdot \deg (\psi)$ or $n \cdot \deg(\psi) \pm 1$.
	\end{proof}

	\begin{proof}[Proof of Proposition~$\ref{prop:injectivity}$]
		By the Reduction Lemma, it suffices to prove that if $v \not\in S$, then $\Aut_\phi(K)$ has no element of order $p$, where $v \mid p$. Suppose otherwise.

		The group $\PGL_2(K)$ contains an element of order $p$ if and only if $\zeta_p + \zeta_p^{-1} \in K$ for some primitive $p$-th root of unity $\zeta_p$ \cite{Beauville_Finite_Subgroups_2010}.  Note that $[\Q(\zeta_p + \zeta_p^{-1}):\Q] = \frac12(p - 1)$ for $p>2$, so that $\frac{p-1}{2} \mid [K:\QQ]$. 
	If $\Aut_\phi(K)$ contains an element of order $p$, then $p$ divides $d(d^2 - 1)$ by Proposition~\ref{Prop: Geometric Bound}. Hence $p \in S_0$, and so $v \in S$. 
	\end{proof}

\section{Proof of Theorem~\ref{Thm: Finite group scheme}}
\label{Sec: Aut Properties}

	Fix a commutative ring $R$. Over $R$,  $\PGL_2$ may be embedded as an affine subvariety of $\PP^3_R = \mathrm{Proj} \ R[\alpha, \beta, \gamma, \delta]$; indeed, it is the complement of the quadric $\alpha\delta - \beta\gamma = 0$. Let $\phi: \PP^1_R \to \PP^1_R$ be a nonconstant endomorphism. We may define $\Aut_\phi$ as a subgroup scheme of $\PGL_2$ as follows. After fixing coordinates of $\PP^1_R$, the morphism $\phi$ can be given by a pair of homogeneous polynomials $\Phi = (\Phi_0(X,Y), \Phi_1(X,Y))$ of degree $d = \deg(\phi)$ with coefficients in $R$ such that the homogeneous resultant $\Res(\Phi_0, \Phi_1)$ is a unit in $R$. The pair $\Phi_0, \Phi_1$ is unique up to multiplication by a common unit in $R$. Similarly, for any $R$-algebra $S$, an element $f \in \PGL_2(S)$ may be given by a pair $F = (\alpha X + \beta Y, \gamma X + \delta Y)$ with $\alpha, \beta, \gamma, \delta \in S$ and $\alpha\delta - \beta\gamma \in S^\times$. Note that $f^{-1}$ is represented by the pair $F^{-1} := (\delta X - \beta Y, -\gamma X + \alpha Y)$. Then $f \circ \phi \circ f^{-1} = \phi$ is equivalent to saying that $F \circ \Phi \circ F^{-1}$ and $\Phi$ define the same morphism on $\PP^1_S \to \PP^1_S$. If we define $(\Phi_0'(X,Y), \Phi_1'(X,Y)) = F \circ \Phi \circ F^{-1}$, then this means 
	\begin{equation}
	\label{Eq: Aut Defining Eqs}
		\Phi_0(X,Y) \Phi'_1(X,Y) - \Phi_1(X,Y) \Phi'_0(X,Y) = 0.
	\end{equation}
The expression on the left is a homogeneous polynomial of degree $2d$ in $X$ and $Y$ whose coefficients are homogeneous polynomials in $R[\alpha, \beta, \gamma, \delta]$. So \eqref{Eq: Aut Defining Eqs} gives $2d + 1$ equations that cut out a closed subscheme of $\PGL_2$ defined over $R$. One checks readily that $\Aut_\phi(S)$ is a subgroup of $\PGL_2(S)$ for every $S$.

	Next we argue that $\Aut_\phi$ is a finite group scheme over $R$ when $\phi$ has degree at least~2. The map $\Aut_\phi \to \Spec R$ is quasi-finite. Indeed, it suffices to check this statement on geometric fibers, and Silverman has shown that $\Aut_\phi(L)$ is a finite group for any algebraically closed field $L$ \cite[Prop.~4.65] {Silverman_Dynamics_Book_2007}.\footnote{Alternatively, \S\ref{Sec: Invariant Sets} gives a conceptually simpler proof of Silverman's result.} 

	Moreover, $\Aut_\phi$ is proper over $\Spec R$. Indeed, since $\Aut_\phi$ and $\Spec R$ are noetherian, this can be checked using the valuative criterion for properness using only discrete valuation rings \cite[Ex.~II.4.11]{Hartshorne_Bible}. Let $\oo$ be a discrete valuation ring with field of fractions $k$, and consider a commutative diagram
	 	\[
	 		\xymatrix{
	 			\Spec k \ar[r] \ar[d] & \Aut_\phi \ar[d] \\
				\Spec \oo \ar[r] \ar@{-->}[ur] & \Spec R.
	 		}
	 	\]
	 The left vertical map is the canonical open immersion, and the right vertical map is the structure morphism. We must show there is a unique morphism $\Spec \oo \to \Aut_\phi$ that makes the entire diagram commute. Without loss of generality, we may assume that $R = \oo$ and that the lower horizontal arrow is the identity map. 

 	If $v: k  \to \ZZ \cup \{ +\infty \}$ is the canonical extension of the valuation on $\oo$, then we may endow $k$ with the structure of a non-Archimedean field by setting $|x| = e^{-v(x)}$ for every $x \in k$. (Note that we interpret $e^{-\infty}$ as zero.) Since $\phi$ is defined over $\oo$, it has good reduction. The Reduction Lemma asserts that every $k$-automorphism of $\phi$ also has good reduction. Equivalently, every $k$-valued point may be extended to an $\oo$-valued point, which is what we wanted to show. 

	We now know that $\Aut_\phi \to \Spec R$ is a quasi-finite proper morphism. Zariski's main theorem tells us that it factors as an open immersion of $R$-schemes $\Aut_\phi \to X$ followed by a finite morphism $X \to \Spec R$. But $\Aut_\phi$ is proper, so any open immersion is actually an isomorphism. Hence $\Aut_\phi$ is finite over $\Spec R$. This completes the proof of Theorem~\ref{Thm: Finite group scheme}.

\section{The conjugation scheme}
\label{Sec: Conjugate}

		Let $F$ be a field, and let $\phi, \psi: \PP^1_F \to \PP^1_F$ be a pair of endomorphisms of the projective line. In this section we want to describe the set of solutions $f \in \Aut(\PP^1)$ to the functional equation $f \circ \phi \circ f^{-1} = \psi$. That is, we investigate the question, ``What can be said about the space of all functions $f$ that conjugate $\phi$ to $\psi$?'' Setting $\psi = \phi$ recovers the automorphism group of $\phi$ as a special case of this question, and yet we need to do little extra work to answer the more general query.

	\begin{define*}
		Fix a non-negative integer $d$, and let $\phi, \psi: \PP^1_R \to \PP^1_R$ be two endomorphisms of degree $d$.  Write $\mathbf{Set}$ for the category of sets. The \textbf{conjugation scheme} of the pair $(\phi, \psi)$ is the $R$-scheme $\Conj_{\phi, \psi}$ representing the functor 
		$\underline{\Conj}_{\phi, \psi} : R\Alg \to \mathbf{Set}$ defined
		by 
			\[
				\underline{\Conj}_{\phi, \psi}(S) = \{f \in \Aut(\PP^1_S) :  f \circ \phi \circ f^{-1} = \psi \}. 
			\]
	\end{define*}

	\begin{thm} \label{Thm: Conj Scheme}
		Let $R$ be a commutative ring, let $d \geq 0$ be an integer, and let $\phi, \psi: \PP^1_R \to \PP^1_R$ be endomorphisms of degree~$d$. Then the functor $\underline{\Conj}_{\phi, \psi}$ is represented by a closed $R$-subscheme $\Conj_{\phi, \psi} \subset \PGL_2$. If moreover $d \geq 2$, then $\Conj_{\phi, \psi}$ is finite over $\Spec R$.  
	\end{thm}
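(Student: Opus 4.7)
The plan is to parallel the proof of Theorem~\ref{Thm: Finite group scheme} in three steps: exhibit $\Conj_{\phi,\psi}$ as a closed subscheme of $\PGL_2$, verify quasi-finiteness on geometric fibers, and establish properness via the valuative criterion. Since the target $\psi$ and source $\phi$ now differ, I expect a small amount of bookkeeping but no conceptual obstacles.

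For representability, fix homogeneous polynomial representatives $\Phi = (\Phi_0, \Phi_1)$ and $\Psi = (\Psi_0, \Psi_1)$ of degree $d$ for $\phi$ and $\psi$, each with unit resultant. Writing $F = (\alpha X + \beta Y, \gamma X + \delta Y)$ for a universal element of $\PGL_2$ and setting $(\Phi'_0, \Phi'_1) := F \circ \Phi \circ F^{-1}$, the relation $f \circ \phi \circ f^{-1} = \psi$ on $S$-points is equivalent to the vanishing of the homogeneous form $\Phi'_0 \Psi_1 - \Phi'_1 \Psi_0$ in $S[X,Y]$. Its $2d+1$ coefficients give polynomial equations in $\alpha, \beta, \gamma, \delta$ that cut out a closed subscheme of $\PGL_2$ over $R$, and one checks that this defines the functor $\underline{\Conj}_{\phi,\psi}$.

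For quasi-finiteness, it suffices to check the geometric fibers. Let $L$ be an algebraically closed field. If $\Conj_{\phi,\psi}(L) = \emptyset$ there is nothing to prove; otherwise choose $f_0 \in \Conj_{\phi,\psi}(L)$ and observe that $f \mapsto f \circ f_0^{-1}$ is a bijection $\Conj_{\phi,\psi}(L) \to \Aut_\psi(L)$, so finiteness follows from Theorem~\ref{Thm: Finite group scheme}.

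For properness, apply the valuative criterion using discrete valuation rings, reducing to the case $R = \oo$ a DVR with fraction field $k$. The key input is the conjugation analogue of the Reduction Lemma: if $\phi$ and $\psi$ are both defined over $\oo$ and $f \in \Conj_{\phi,\psi}(k)$, then $f$ extends to an element of $\Aut(\PP^1_\oo)$. The proof is the same Berkovich computation as in Section~\ref{Sec: Red Lemma}: if $\zeta$ denotes the Gauss point, then
\[
f^{-1}(\zeta) = (\psi \circ f)^{-1}(\zeta) = (f \circ \phi)^{-1}(\zeta) = \phi^{-1}(f^{-1}(\zeta)),
\]
so $f^{-1}(\zeta)$ is a totally invariant type~II point for $\phi$. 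Since $\phi$ has degree at least $2$ and $\zeta$ is already totally invariant for $\phi$ by good reduction, Rivera-Letelier's uniqueness statement forces $f^{-1}(\zeta) = \zeta$; equivalently, $f$ has good reduction. With a quasi-finite proper morphism $\Conj_{\phi,\psi} \to \Spec R$ in hand, Zariski's main theorem concludes the proof exactly as in Section~\ref{Sec: Aut Properties}. The only genuinely new ingredient is the conjugation version of the Reduction Lemma, but since the argument there only uses that $f$ intertwines two maps of good reduction, it transfers verbatim.
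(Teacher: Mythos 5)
Your proposal is correct and mirrors the paper's proof almost exactly: same closed-subscheme construction from the defining equations, same reduction to $\Aut$ (you use right translation to $\Aut_\psi$ where the paper uses left translation to $\Aut_\phi$, which is immaterial), and the same Berkovich argument for the conjugation version of the Reduction Lemma to get properness. Nothing to flag.
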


\begin{remark}
	The theorem does not preclude the possibility that $\Conj_{\phi, \psi}$ is the empty scheme, which is typically the case when $d \geq 2$. The group scheme $\PGL_2$ has relative dimension~$3$ over $R$, while the space $\mathrm{Rat}_d$ of endomorphisms of $\PP^1$ of degree $d$ has relative dimension $2d+1 > 3$. So for a fixed $\phi \in \Rat_d(R)$, a general choice of $\psi$ will yield $\Conj_{\phi, \psi} = \varnothing$. 
\end{remark}

	\begin{remark} 
	When $\Conj_{\phi, \psi}$ is not the empty scheme, it is a principal homogeneous space for $\Aut_\phi$ (and $\Aut_\psi$). 
	\end{remark}
	
	The construction of the closed subscheme $\Conj_{\phi, \psi} \subset \PGL_2$ proceeds along the same lines as that of $\Aut_\phi$ in \S\ref{Sec: Aut Properties}, so we leave the details to the reader. Note that the equations defined by \eqref{Eq: Aut Defining Eqs} remain equally valid in this setting if we write $\Psi = (\Psi_0(X,Y), \Psi_1(X,Y))$ for a homogenization of $\psi$ and replace $\Phi_i$ with $\Psi_i$. In particular, $\Conj_{\phi, \psi}$ is cut out as a subscheme of  $\PGL_2$ by $2d + 1$ homogeneous polynomials of degree $d+1$ in the four variables $\alpha, \beta, \gamma, \delta$, where $\PGL_2 \subset \mathrm{Proj} \ R[\alpha, \beta, \gamma, \delta]$. 
	
	In the case $d \geq 2$ of the theorem, in order to establish that $\Conj_{\phi, \psi}$ is finite over $\Spec R$, one must argue that it is proper and quasi-finite. Properness follows from a direct generalization of the Reduction Lemma (and its proof):
	
		\begin{RedLem*}[Part II]
			Let $k$ be a non-Archimedean field with valuation ring $\oo$ and residue field $\FF$, 
			and let $\phi, \psi \in k(z)$ be rational functions of degree at least~$2$.
			Suppose that both $\phi$ and $\psi$ have good reduction. Then every element of 
			$\Conj_{\phi,\psi}(k)$ has good reduction, and the canonical reduction $\oo \to \FF$ induces 
			 a map of sets $\red_{\phi, \psi} : \Conj_{\phi,\psi}(k) \to \Conj_{\phi,\psi}(\FF)$. If the order of 
			 $\Aut_\phi(k)$ is relatively prime to the characteristic of $\FF$, then 
			 $\red_{\phi, \psi}$ is injective. 
		\end{RedLem*}
		
\begin{proof} 
	Only the final statement of the lemma requires further comment. The Reduction Lemma for $\Aut_\phi$ shows that the kernel of the  homomorphism $\red_\phi : \Aut_\phi(k) \to \Aut_\phi(\FF)$ is trivial. If $f, g \in \Conj_{\phi, \psi}(k)$ have the same image in $\Conj_{\phi, \psi}(\FF)$, then $f^{-1}g$ lies in the kernel of $\red_\phi$, so that $f = g$. 
\end{proof}

	To complete the proof of Theorem~\ref{Thm: Conj Scheme}, it remains to show that $\Conj_{\phi, \psi}$ is quasi-finite when $\deg(\phi) = \deg(\psi) \geq 2$, for which it suffices to take $R = F$ to be a field and prove that $\Conj_{\phi, \psi}(F)$ is finite. If $\Conj_{\phi, \psi}(F)$ is empty, we are finished. Otherwise, fix an element $f_0 \in \PGL_2(F)$ that conjugates $\phi$ to $\psi$. Given an element $f \in \Conj_{\phi, \psi}(F)$, we see that 
	\[
		(f_0^{-1} \circ f) \circ \phi \circ (f_0^{-1} \circ f)^{-1} = f_0^{-1} \circ \left( f \circ \phi \circ f^{-1} \right) \circ f_0 
			= f_0^{-1} \circ \psi \circ f_0 = \phi. 
	\]
That is, the association $f \mapsto f_0^{-1} \circ f$ defines a map of sets			
	\[
		\Conj_{\phi, \psi}(F) \to \Aut_\phi(F).
	\]
This map is evidently bijective. Since $\Aut_\phi(F)$ is a finite set \cite[Prop.~4.65] {Silverman_Dynamics_Book_2007}, so is $\Conj_{\phi, \psi}(F)$.   

	We close this section with a version of Proposition~\ref{prop:injectivity} that applies to conjugation sets. 
	
\begin{cor}\label{Cor: Injectivity}
	Let $K$ be a number field and let $\phi, \psi \in K(z)$ be rational functions of degree $d \geq 2$. Define $S_0$ to be the set of rational primes given by 
		 \[
		 	S_0 = \{2\} \cup 
				\left\{p \text{ odd}: \frac{p - 1}{2} \Big| [K : \QQ] \ \text{ and } \ p \mid d(d^2 - 1) \right\},
		\]
	and let $S$ be the \textup(finite\textup) set of places of $K$ of bad reduction for $\phi$ or $\psi$ along with the places that divide a prime in $S_0$. If $\Conj_{\phi, \psi}(K)$ is nonempty, then $\red_v: \Conj_{\phi, \psi}(K) \to \Conj_{\phi, \psi}(\FF_v)$ is a well defined injection of sets for all places $v$ outside $S$. 
\end{cor}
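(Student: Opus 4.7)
The plan is to deduce the corollary directly from Part II of the Reduction Lemma together with Proposition \ref{prop:injectivity}, exploiting the torsor structure of $\Conj_{\phi,\psi}$ over $\Aut_\phi$. Fix a place $v \notin S$. By the definition of $S$, both $\phi$ and $\psi$ have good reduction at $v$, so we may view them as degree-$d$ endomorphisms of $\PP^1$ defined over the valuation ring of $K_v$. Part II of the Reduction Lemma, applied over $K_v$, produces a well-defined reduction map $\red_v : \Conj_{\phi,\psi}(K_v) \to \Conj_{\phi,\psi}(\FF_v)$. Restricting to $K$-valued points (via the inclusion $K \hookrightarrow K_v$) yields the desired map $\red_v : \Conj_{\phi,\psi}(K) \to \Conj_{\phi,\psi}(\FF_v)$.

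For injectivity, suppose $f, g \in \Conj_{\phi,\psi}(K)$ satisfy $\red_v(f) = \red_v(g)$. Since both $f$ and $g$ conjugate $\phi$ to $\psi$, a direct computation (identical to the one in the proof of quasi-finiteness of $\Conj_{\phi,\psi}$ at the end of \S\ref{Sec: Conjugate}) shows that $h := f^{-1} \circ g$ belongs to $\Aut_\phi(K)$. The reduction map on $\PGL_2$ is a group homomorphism, so
\[
    \red_v(h) = \red_v(f)^{-1} \circ \red_v(g) = \id.
\]
Thus $h$ lies in the kernel of $\red_v : \Aut_\phi(K) \to \Aut_\phi(\FF_v)$.

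At this point, I would invoke Proposition \ref{prop:injectivity}. The set $S_0$ appearing in that proposition is defined by exactly the same conditions as the $S_0$ here, and the hypothesis $v \notin S$ certainly implies $v$ lies outside the corresponding set of bad places for $\phi$ alone. Consequently, Proposition \ref{prop:injectivity} asserts that $\red_v$ is injective on $\Aut_\phi(K)$, so $h = \id$ and hence $f = g$, completing the proof.

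There is no genuine obstacle here: the entire argument is essentially a translation of Proposition \ref{prop:injectivity} across the torsor $\Conj_{\phi,\psi} \to \Aut_\phi$. The only items that require care are (i) checking that the set $S_0$ in the corollary suffices to trigger Proposition \ref{prop:injectivity} for $\phi$ (it does, verbatim), and (ii) verifying that reduction is multiplicative on $\PGL_2$-valued points, which is automatic from the scheme-theoretic construction of $\Aut_\phi$ and $\Conj_{\phi,\psi}$ as subschemes of $\PGL_2$.
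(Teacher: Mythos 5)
Your proposal is correct and follows essentially the same approach as the paper: both exploit the torsor structure of $\Conj_{\phi,\psi}$ over $\Aut_\phi$ to reduce injectivity to Proposition~\ref{prop:injectivity}, with well-definedness coming from Part II of the Reduction Lemma. The paper packages the argument as a commutative square with vertical bijections given by postcomposition with a fixed $f_0^{-1}$ and its reduction, while you argue directly with $h = f^{-1}\circ g$, but these are the same idea.
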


\begin{proof}
	Let $f_0 \in \Conj_{\phi, \psi}(K)$. For $v \not\in S$, we have the following diagram of morphisms of sets:
	\[
		\xymatrix{
			\Conj_{\phi,\psi}(K) \ar[r]^{\red_v} \ar[d]_{f_0^{-1} \circ} 
			& \Conj_{\phi, \psi}(\FF_v) \ar[d]^{\red_v(f_0)^{-1} \circ} \\
			\Aut_\phi(K) \ar[r]^{\red_v} & \Aut_\phi(\FF_v) \\
		}
	\]
The vertical arrows denote postcomposition with the indicated element; they are bijections by the discussion immediately preceding this proof. The Reduction Lemmas show that the horizontal arrows are well defined. The diagram commutes because $\PGL_2$ is a group scheme. We have already shown the lower horizontal arrow is injective (Proposition~\ref{prop:injectivity}), so the top one must share this property.
\end{proof}


\section{General Algorithms}
\label{Sec: GeneralAlgorithms}

	The goal of this section is to collect and compare a number of algorithms for computing the set $\Conj_{\phi, \psi}(F)$ over a variety of fields $F$, with an emphasis  on the cases where $F$ is a finite field or number field. Since $\Aut_\phi(F) = \Conj_{\phi, \phi}(F)$, these algorithms will also compute the automorphism group of a rational function $\phi$; in the next section we will propose routines for computing $\Aut_\phi(F)$ that are typically much more efficient. 
	
	Given two different rational functions $\phi$ and $\psi$, one might be interested in determining if they are conjugate over the field $F$ (or over an algebraic closure). One may use the methods below and include an early termination condition if any single conjugating map $f_0$ is found such that $\psi = f_0 \circ \phi \circ f_0^{-1}$. The proof that $\Conj_{\phi, \psi}(F)$ is finite in the preceding section shows that $\Conj_{\phi, \psi}(F) = f_0 \circ \Aut_\phi(F)$. If one wants to compute the full set $\Conj_{\phi,\psi}(F)$, in practice it is most efficient to find such an $f_0$ and then compute $\Aut_\phi(F)$ as in the next section.  


\subsection{Gr\"obner Bases}
	Buchberger's algorithm allows one to compute the points of a zero-dimensional scheme by constructing a Gr\"obner basis for its ideal of definition $I$ with respect to an appropriate monomial ordering \cite[Ch.~15]{Eisenbud_Comm_Alg}. Over a fixed polynomial ring, its performance typically degrades as the degrees of the generators of $I$ grow. When $d = \deg(\phi) = \deg(\psi)$, we saw in \S\ref{Sec: Conjugate} that $\Conj_{\phi, \psi}$ is a zero-dimensional scheme naturally defined by $2d + 1$ homogeneous polynomials of degree $d+1$ in four variables.  

	We used built-in functions in \texttt{Magma} to compute $\Aut_{\phi}(\Q)$ and compare the methods described below.  We found that this Gr\"obner basis technique can be competitive with the other algorithms developed for $d\approx 3$, but for larger $d$ this method is substantially worse.  See the tables in~\S\ref{Sec: Examples} for sample run times.


\subsection{Finite Fields --- Exhaustive Search}
	Writing $\FF_q$ for the finite field with $q$ elements, one sees that $\PGL_2(\FF_q)$ contains $q(q^2 - 1)$ elements. When $q$ and $d$ are small, it is reasonably efficient to compute $\Conj_{\phi, \psi}(\FF_q)$ by exhaustive search. Verifying the identity $\psi \circ s = s \circ \phi$ requires $O(d^2 \log^3 q)$ bit operations for a general choice of $\phi$ and $\psi$ of degree~$d$ and an element $s \in \PGL_2(\FF_q)$. Since we expect $\Conj_{\phi, \psi}(\FF_q)$ is empty, this method typically requires $O(q^3)$ such verifications to complete.  When $\phi = \psi$, so that $\Conj_{\phi, \psi}(\FF_q) = \Aut_\phi(\FF_q)$, this approach can typically be accelerated by using the classification of subgroups of $\PGL_2(\FF_q)$ \cite[Thm.~D]{PGL(2)_subgroups} to build in early termination conditions. 


\subsection{Method of Invariant Sets}
\label{Sec: Invariant Sets}
	Let $F$ be an arbitrary field, and suppose $\phi, \psi: \PP^1_F \to \PP^1_F$ are morphisms of degree at least~2. In this section we describe an algorithm to compute $\Conj_{\phi,\psi}(F)$ using linear algebra, assuming the existence of a pair of subsets $T_\phi, T_\psi \subset \PP^1(F)$ such that $s(T_\phi) = T_\psi$ for all $s \in \Conj_{\phi, \psi}(F)$. Over a given field $F$, one cannot always find such subsets, but they are easy to construct over an extension field of $F$. In particular, this method lends itself naturally to computing $\Conj_{\phi, \psi}(\alg{F})$, where $\alg{F}$ is an algebraic closure of~$F$. In fact, we will see that it also gives a field of definition $E/F$ for the absolute conjugating set, although $E$ is typically not the smallest such field.

	The degree $d = \deg(\phi) = \deg(\psi)$ is the principle measure of complexity in this algorithm. If the coefficients of $\phi$, $\psi$, and a candidate element $s \in \PGL_2(E)$ have length at most $k$ bits, then verifying the equality $s \circ \phi = \psi \circ s$ requires $O(d^3 k^2)$ bit operations in general. This can be reduced to $O(d^2 \log^3q)$ if $E$ is a finite field with $q$ elements.  The number of candidates $s \in \PGL_2(E)$ is approximately $\#T_\phi^3/6 = O(d^3)$, which can make this algorithm very inefficient if the degree is large. 
	
	We begin by explaining Algorithm~\ref{Alg: Three points algorithm}, which determines $\Conj_{\phi, \psi}(E)$ if we already have the ``invariant pair'' $T_\phi, T_\psi$. Then we give a description of how one constructs such a pair. Finally, we discuss several early termination conditions for detecting whether $\phi$ and $\psi$ fail to be conjugate. 

\subsubsection{Conjugation Sets from Invariant Pairs} \label{Sec: Three Points Algorithm} For this part of the discussion, let $E$ be any field over which $\phi$ and $\psi$ are defined. Suppose that we have two finite subsets  $T_\phi, T_\psi \subset \PP^1(E)$ satisfying the following conditions
			\begin{itemize}
				\item $\#T_\phi = \#T_\psi \geq 3$, and 
				\item $s(T_\phi) = T_\psi$ for every $s \in \Conj_{\phi, \psi}(E)$.\footnote{If 
					$\Conj_{\phi, \psi}(E) = \varnothing$, then the second condition is vacuous. }
			\end{itemize}
Let us call $\{T_\phi, T_\psi\}$ an \textbf{invariant pair} for $\phi$ and $\psi$. 

\begin{algorithm}[h]
\begin{flushleft}
Input: 
	\begin{itemize}
		\item  a nonconstant rational function $\phi \in E(z)$
		\item an invariant pair $T_\phi = \{\tau_1, \ldots, \tau_n\}$ and 
			$T_\psi = \{\eta_1, \ldots, \eta_n\}$ of $\PP^1(E)$
	\end{itemize}

Output: the set $\Conj_{\phi, \psi}(E)$ \\

\medskip

create an empty list $L$ \\

\medskip

for each triple of distinct integers $i,j,k \in \{1, \ldots, n\}$:  \\
	 \tab compute $s \in \PGL_2(E)$ by solving the system of linear equations
	\[
		s(\tau_1) = \eta_i, \quad s(\tau_2) = \eta_j, \quad s(\tau_3) = \eta_k
	\]
	\tab if  $s \circ \phi = \psi \circ s$:  append $s$ to $L$ \\

\medskip

return $L$

\end{flushleft}
\caption{--- Compute $\Conj_{\phi, \psi}(E)$ given an invariant pair in $\PP^1(E)$}
\label{Alg: Three points algorithm}
\end{algorithm}

\begin{proof}[Proof of Correctness]
Given $s \in \Conj_{\phi, \psi}(E)$, there is a triple of distinct indices $i,j,k \in \{1, \ldots, n\}$ such that  $s(\tau_1) = \eta_i$, $s(\tau_2) = \eta_j$, and $s(\tau_3) = \eta_k$. Conversely, given a triple of distinct indices $i,j,k \in \{1, \ldots, n\}$, there exists a unique element $s \in \PGL_2(E)$ such that $s(\tau_1) = \eta_i$, $s(\tau_2) = \eta_j$, and $s(\tau_3) = \eta_k$. These three equations are linear in the coefficients of~$s$. One now determines if this candidate element $s$ actually satisfies the functional equation $s \circ \phi = \psi \circ s$; if that is the case, then $s \in \Conj_{\phi,\psi}(E)$. In this way, we see that $\Conj_{\phi, \psi}(E)$ is a subset of $\mathrm{Hom}_E(T_\phi, T_\psi)$, the set of elements of $\PGL_2(E)$ mapping $T_\phi$ to $T_\psi$. 
\end{proof}

\subsubsection{Constructing an Invariant Pair}\label{Sec: Invariant Pair}	We now suppose that $\phi$ and $\psi$ are conjugate rational functions defined over a field $F$ and give a construction of sets $T_\phi$ and $T_\psi$ as in the preceding subsection. We may assume that $\deg(\phi) = \deg(\psi) = d$, since otherwise $\phi$ and $\psi$ are not conjugate. 
	
	Let $\Fix(\phi)$ be the set of fixed points of $\phi$, which has cardinality between 1 and $\deg(\phi) + 1$, inclusive. Note that any element $s \in \Conj_{\phi, \psi}(F)$ necessarily maps the fixed points of $\phi$ bijectively onto the fixed points of $\psi$. Indeed, $s$ is invertible, and if $x \in \Fix(\phi)$, then $\psi(s(x)) = s(\phi(x)) = s(x)$. Consequently, if the number of fixed points of $\phi$ differs from that of $\psi$, then $\phi$ and $\psi$ are not conjugate. A similar calculation shows that if $x \in \PP^1(F)$ is any point, then $s$ maps the set $\phi^{-n}(x)$ bijectively onto the set $\psi^{-n}(s(x))$ for each $n \geq 1$. Since $\phi$ and $\psi$ are conjugate, it is necessary that the sets $\phi^{-n}(\Fix(\phi))$ and $\psi^{-n}(\Fix(\psi))$ have the same cardinality for each $n \geq 1$.

	Define a set $T_\phi \subset \PP^1(\alg{F})$ by the following formula:
	\begin{equation}
	\label{Eq: Invariant pair}
		T_\phi = \begin{cases}
			\Fix(\phi) & \text{if } \#\Fix(\phi) \geq 3 \\
			\phi^{-1}(\Fix(\phi)) & \text{if } \#\Fix(\phi) = 2 \\
			\phi^{-2}(\Fix(\phi)) & \text{if } \#\Fix(\phi) = 1.
		\end{cases}
	\end{equation}
We claim that $T_\phi$ has cardinality at least~3 in all cases. This is evident in the first case. In the second, note that $\Fix(\phi) \subset \phi^{-1}(\Fix(\phi))$. So if $\#T_\phi = 2$, then each point of $\Fix(\phi)$ is totally ramified for $\phi$. The derivative at each of the fixed points vanishes,\footnote{More precisely, the induced map $T\phi$ on the tangent space $T\PP^1_x$ is zero.} which  means that each element of $\Fix(\phi)$ has fixed point multiplicity~1. But the total number of fixed points of a map of degree~$d$ is $d+1 \geq 3$, counting multiplicities, so we have a contradiction. (See, for example, \cite[Appx.~A]{Faber_Granville_Crelle_2011}.) Finally, suppose that we are in the third case, so that $\Fix(\phi)= \{x\}$. We claim that $\#\phi^{-1}(x) \geq 2$, for otherwise $x$ is ramified for $\phi$, which implies that the derivative $\phi'(x)$ vanishes there. But the fact that $x$ is the unique fixed point of $\phi$ means that in local coordinates centered at $x$ our map is of the form $z \mapsto z + a_{d+1}z^{d+1} + \cdots$ with $a_{d+1} \neq 0$. The derivative cannot vanish at $x$, and we must have 	$\#\phi^{-1}(x) \geq 2$ as desired. If $\phi^{-1}(x)$ consists of at least three points, then evidently so does $T_\phi = \phi^{-2}(x)$.   Otherwise, $\phi^{-1}(x) = \{x,y\}$, which means that $T_\phi = \phi^{-2}(x) = \{x,y\} \cup \phi^{-1}(y)$, which satisfies $3 \leq \#T_\phi \leq d + 2$. 

	Define $T_\phi$ as in the preceding paragraph, and define $T_\psi$ using the same recipe applied to~$\psi$. Write $E = F(T_\phi \cup T_\psi)$ for the field extension generated by the elements of $T_\phi \cup T_\psi$. Then $s(T_\phi) = T_\psi$ for every $s \in \Conj_{\phi, \psi}(E)$. We have therefore constructed an invariant pair. 

\subsubsection{Rationality Issues} The method of invariant sets produces a finite Galois extension $E/ F$ and the set $\Conj_{\phi, \psi}(E)$. One has some control over the field $E$, but it is often impossible to choose $E = F$ with this technique. An element $s \in \Conj_{\phi, \psi}(E)$ lies in $\Conj_{\phi, \psi}(F)$ if and only if it is invariant under the action of the Galois group $\Gal(E/F)$. Computing this Galois group is computationally impractical for most fields. However, when $F$ is a finite field with $q = p^r$ elements, the Galois group of $E/F$ is generated by the $q$-power Frobenius map. So it is possible to use the method of invariant sets to compute $\Conj_{\phi, \psi}(F)$ and $\Aut_\phi(F)$ for finite fields. 

	Additionally, if one is only interested in computing $\Conj_{\phi,\psi}(F)$, then it is enough to compute a subset of $\Conj_{\phi,\psi}(E)$ that satisfies conditions necessary for Galois invariance.  More specifically, we can restrict our attention to any tuples of $\mathrm{Gal}(\alg{F} / F)$-stable subsets of $T_\phi$ and $T_\psi$ as defined in \eqref{Eq: Invariant pair}, provided that the union of the subsets have at least~3 elements each.  For example, assume that $T_{\phi}$ contains exactly one $F$-rational point $y$ and exactly $2$ points $z_1,z_2$ defined over a quadratic extension $F'/F$.  Then if $\Conj_{\phi,\psi}(F)\ne\varnothing$, $T_{\psi}$ must contain points $y',z_1', z_2'$ with the same properties, and any $s\in\Conj_{\phi,\psi}(F)$ must satisfy $s(y) = y', s(z_i) \in \{z_1', z_2'\}$ for $i = 1,2$.


\subsection{Number Fields --- Chinese Remainder Theorem}
\label{Sec: Number Fields}
	We saw above that the method of invariant sets becomes impractical when the degree $d$ is large. 
Over a number field $K$, we can give an alternative algorithm that works well for large degree and has the added benefit of computing the $F$-rational points of $\Conj_{\phi, \psi}(F)$ (as opposed to the $E$-rational points for some Galois extension $E/F$ over which we have little control). We use an approach that is ubiquitous in number theory: first compute the conjugation set over a residue field $\F_v$ for some finite place(s) $v$, and then use the local information to obtain a global answer. More precisely, our method is as follows. (See also Algorithm~\ref{Alg:NumberField}.)

	Initialize empty sets $\texttt{Conjs}$ and $S_0$ and set $i := 0$.  Let $v_i$ be a finite prime outside of $S\cup S_0$, where $S$ is defined as in Corollary~\ref{Cor: Injectivity}.  Compute $\Conj_{\phi, \psi}(\F_{v_i})$ using one of the methods described above. (If $\phi = \psi$, it is typically more efficient to use the method of fixed points detailed in the next section.)  Let $G\subseteq \Aut(\PP^1_{\OO_K/\prod v_j})$ be a subset that surjects onto $\Conj_{\phi, \psi}(\F_{v_j})$ for each $j = 0, \ldots, i$; we refer to this step as the CRT (Chinese Remainder Theorem) step.  For each element $g\in G$, choose a lift $f_g\in\Aut(\PP^1_K)$ of minimal height.  If $f_g \circ \phi  = \psi \circ f_g$ then add $f_g$ to \texttt{Conjs}.  After this is complete, check if \texttt{Conjs} surjects onto $\Conj_{\phi, \psi}(\F_{v_j})$ for \emph{any} $j\in\{0,\ldots, i\}$.  If so, then $\texttt{Conjs} = \Conj_{\phi,\psi}(K)$ and we are done.  If not, then append $v_i$ to $S_0$, increment $i$, and repeat.

	In order to make this method into an algorithm, we need to provide a terminating condition.  Write $N(v)$ for the norm of a finite prime $v$. We claim that if $\prod_{i} \textup{N}(v_i) \geq 2^{[K:\Q]}M^2$, for some explicitly computable constant $M$, then \texttt{Conjs} $ = \Conj_{\phi, \psi}(K)$, even if \texttt{Conjs} does not surject onto $\Conj_{\phi, \psi}(\F_{v_j})$ for any fixed $i$.  We will spend the rest of the section proving this claim via the theory of heights.

	Let $H_K\colon \PP^1(K) \to \RR_{\ge 1}$ denote the relative multiplicative height for $K$ and let $L_2(f)$ denote the $L_2$-norm of a polynomial $f$.  See, for example, \cite[B.2, B.7]{Hindry_Silverman_Book_2000} for definitions. 

	\begin{prop}\label{prop:HeightBound}
		Let $T, T'\subset\PP^1(\alg{K})$ be Galois invariant sets of order at least $3$, and let $f_T, f_{T'}\in K[w,z]_{(0)}$ be square-free polynomials such that $V(f_T) = T$ and $V(f_{T'}) = T'$.  Then for any $s\in \Aut(\PP^1_K) \subset \PP^3(\alg{K})$ such that $s(T) = T'$, we have $H_K(s) \le 6^{[K:\Q]}L_2(f_T)^3L_2(f_{T'})^3$.
	\end{prop}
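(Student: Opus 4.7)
My plan is to exploit the fact that any element $s \in \PGL_2$ is uniquely determined by the image of three distinct points, and to express the homogeneous coordinates of $s$ as explicit polynomial functions of the coordinates of three points in $T$ and their images in $T'$. After base change to $\alg{K}$, I would select three distinct points $P_1, P_2, P_3 \in T$ (possible since $|T| \ge 3$) and set $Q_i := s(P_i) \in T'$. Writing $P_i = [\alpha_i : \beta_i]$ and $Q_i = [\gamma_i : \delta_i]$, the condition $s(P_i) = Q_i$ becomes, in homogeneous coordinates $(a:b:c:d) \in \PP^3$ for $s$, the linear equation $\alpha_i\delta_i \cdot a + \beta_i\delta_i \cdot b - \alpha_i\gamma_i \cdot c - \beta_i\gamma_i \cdot d = 0$. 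This $3 \times 4$ system has a unique projective solution $(a:b:c:d)$ given by the four signed $3 \times 3$ minors of the coefficient matrix. Each such minor is a polynomial multilinear in the three pairs $(\alpha_i, \beta_i)$ and in the three pairs $(\gamma_j, \delta_j)$, consisting of exactly $3! = 6$ signed monomial terms.

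Next, for each place $v$ of $K$ extended to a place $\bar v$ of $\alg{K}$, I would apply the triangle inequality at archimedean $v$ (and the ultrametric inequality at non-archimedean $v$) to each minor to obtain
\[
\max\bigl(|a|_{\bar v}, |b|_{\bar v}, |c|_{\bar v}, |d|_{\bar v}\bigr) \le \varepsilon_v \prod_{i=1}^{3} \max\bigl(|\alpha_i|_{\bar v}, |\beta_i|_{\bar v}\bigr)\cdot \max\bigl(|\gamma_i|_{\bar v}, |\delta_i|_{\bar v}\bigr),
\]
where $\varepsilon_v = 6$ at archimedean $v$ and $\varepsilon_v = 1$ at non-archimedean $v$. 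For any root $P = [\alpha:\beta]$ of a square-free binary form $f$, I would then bound the local factor $\max(|\alpha|_{\bar v}, |\beta|_{\bar v})$ by the local $L_2$-norm of $f$ at $v$ after a suitable choice of representative: at archimedean places this stems from Mahler's inequality $M(f) \le L_2(f)$ via Jensen's formula, and at non-archimedean places it follows from the multiplicativity of the Gauss norm under factorization. Applying this to each $P_i$ with $f_T$ and to each $Q_j$ with $f_{T'}$, combining with the minor inequality, and taking the product over all places with weights $n_v = [K_v : \Q_v]$ produces $H_K(s) \le 6^{[K:\Q]} L_2(f_T)^3 L_2(f_{T'})^3$, where the archimedean-only contribution of $\varepsilon_v$ accounts for the factor $6^{[K:\Q]}$ via $\sum_{v\mid\infty} n_v = [K:\Q]$.

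The main technical obstacle I anticipate is handling the normalization of the representatives $(\alpha_i,\beta_i)$ and $(\gamma_i,\delta_i)$ in $\alg{K}$ coherently across all places, so that the local bounds combine via the product formula without accruing extra constants. Since both sides of the desired inequality are invariant under simultaneous rescaling of the representatives — the left-hand side is a projective invariant, and the right-hand side via the product formula — there is some flexibility, but care is required to track how the scaling of an individual representative affects each local factor and to ensure the archimedean constant stays exactly $6$ per place. One clean resolution is to fix a set of representatives globally and verify that the resulting local-to-global estimates remain invariant under simultaneous scaling at all places.
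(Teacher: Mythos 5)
Your proposal is correct and takes essentially the same approach as the paper: the paper's explicit $S_3$-indexed formulas for $\alpha,\beta,\gamma,\delta$ are exactly your $3\times3$ minors of the coefficient matrix, and the local bounds combine with $\varepsilon_v=6$ at archimedean places and $1$ elsewhere just as you describe. The only cosmetic difference is that the paper invokes \cite[Lem.~B.7.3.1]{Hindry_Silverman_Book_2000} for the bound $H_K(\tau_i)\leq L_2(f_T)$ rather than reassembling it from Mahler measure and Gauss-norm multiplicativity as you sketch.
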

	
	\begin{proof}
		Let $s$ be as in the statement of the Proposition.  Let $\tau_1, \tau_2, \tau_3$ be $3$ distinct elements of $T$, and let $\eta_i := s(\tau_i) \in T'$.  In coordinates, we write $\tau_i = (\tau_{i,0}:\tau_{i,1})$ and $\eta_i = (\eta_{i,0}:\eta_{i,1})$.  Since an automorphism of $\PP^1$ is determined by its action on $3$ elements, we have an expression for $s = \mat{\alpha & \beta\\\gamma&\delta}$ in terms of $\tau_{i,j}, \eta_{i,j}$, i.e.
		\[
		\begin{array}{ll}
			\alpha = \sum_{\sigma\in S_3}(\textup{sgn }\sigma)
				B_{\sigma(1)}C_{\sigma(2)}D_{\sigma(3)}, &
			\beta = \sum_{\sigma\in S_3}(\textup{sgn }\sigma)
				A_{\sigma(1)}C_{\sigma(2)}D_{\sigma(3)}, \\
			\gamma = \sum_{\sigma\in S_3}(\textup{sgn }\sigma)
				A_{\sigma(1)}B_{\sigma(2)}D_{\sigma(3)}, &
			\delta = \sum_{\sigma\in S_3}(\textup{sgn }\sigma)
				A_{\sigma(1)}B_{\sigma(2)}C_{\sigma(3)},
		\end{array}
		\]
		where $A_i = \tau_{i,0}\eta_{i,1}$, $B_i = -\tau_{i,1}\eta_{i,1}$, $C_i = -\tau_{i,0}\eta_{i,0},$ and $D_i = \tau_{i,1}\eta_{i,0}.$

		This expression allows us to obtain a bound on the local height of $s$.  Let $v$ be any place of $K$ and let $\varepsilon_v = 6$ if $v \mid \infty$ and $\varepsilon = 1$ if $v \nmid \infty$.  Then, by the triangle inequality,
		\[
			|\alpha|_v \leq \varepsilon_v \cdot 
			\max_{\sigma\in S_3} |B_{\sigma(1)}C_{\sigma(2)}D_{\sigma(3)}|_v 
			\leq \varepsilon_v  \prod_{1 \leq i \leq 3} \max\{|\tau_{i0}|_v,|\tau_{i1}|_v \} \cdot
				\max\{|\eta_{i0}|_v,|\eta_{i1}|_v \}.
		\]
		One can easily check that the same bound holds for $|\beta|_v, |\gamma|_v, |\delta|_v$.
		It follows that 
		\begin{align*}
			H_K(s) &= 
				\prod_v \max\{
				|\alpha|_v, |\beta|_v, |\gamma|_v, |\delta|_v \}^{[K_v:\Q_v]} \\
				&\leq \prod_v \varepsilon_v^{[K_v:\Q_v]} \cdot 
				\prod_{1\leq i \leq 3} 
				\max\{|\tau_{i0}|_v,|\tau_{i1}|_v \}^{[K_v:\Q_v]} \cdot
				\max\{|\eta_{i0}|_v,|\eta_{i1}|_v \}^{[K_v:\Q_v]} \\
				&= 6^{[K:\Q]} \prod_{1 \leq i \leq 3} H_K(\tau_i) H_K(\eta_i).
		\end{align*}
		Since $H_K(\tau_i)\leq L_2(f_T)$ and $H_K(\eta_i) \leq L_2(f_{T'})$~\cite[Lemma B.7.3.1]{Hindry_Silverman_Book_2000}, this completes the proof.
	\end{proof}

	\begin{cor}\label{cor:height_bound}
		Let $\phi, \psi \in K(z)$ be rational functions of degree $>1$, let $T_\phi, T_\psi \subset \PP^1(\alg{K})$ be an invariant pair as in Section~$\ref{Sec: Three Points Algorithm}$ that is stable under the action of $\mathrm{Gal}(\alg{K}/K)$.\footnote{Observe that the invariant pairs constructed in \S\ref{Sec: Invariant Pair} are Galois stable.} Let $f_{T_\phi}, f_{T_\psi}$ be square-free polynomials such that $V(f_{T_\phi}) = T_\phi$ and similarly for $f_{T_\psi}$. Then every element of $\Conj_{\phi, \psi}(K) \subset \PGL_2(K) \subset \PP^3(K)$ has relative multiplicative height bounded by $6^{[K:\QQ]} L_2(f_{T_\phi})^3L_2(f_{T_\psi})^3$ .
	\end{cor}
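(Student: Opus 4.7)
The plan is to deduce the corollary directly from Proposition~\ref{prop:HeightBound}; essentially no new work is required beyond checking that the hypotheses of that proposition are in force in the present setting.

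First, I would verify the hypotheses on $T_\phi$ and $T_\psi$. By the very definition of an invariant pair given in \S\ref{Sec: Three Points Algorithm}, we have $\#T_\phi = \#T_\psi \geq 3$, and $s(T_\phi) = T_\psi$ for every $s \in \Conj_{\phi,\psi}(E)$ over any field $E$ where the sets are defined. In particular, any $s \in \Conj_{\phi,\psi}(K)$ may be viewed inside $\Conj_{\phi,\psi}(\alg{K})$, so it satisfies $s(T_\phi) = T_\psi$ there. The corollary's additional $\mathrm{Gal}(\alg{K}/K)$-stability hypothesis on $T_\phi$ and $T_\psi$ guarantees that we may choose square-free polynomials $f_{T_\phi}, f_{T_\psi}$ with coefficients in $K$ cutting out these sets, which is precisely the input expected by Proposition~\ref{prop:HeightBound}.

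Second, I would apply Proposition~\ref{prop:HeightBound} with $T = T_\phi$, $T' = T_\psi$, and $s$ running over $\Conj_{\phi,\psi}(K)$. This immediately yields
\[
H_K(s) \;\leq\; 6^{[K:\QQ]}\, L_2(f_{T_\phi})^3 L_2(f_{T_\psi})^3
\]
for every such $s$, which is the stated bound.

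There is no genuine obstacle in the proof of this corollary itself, since the substantive work—estimating $|\alpha|_v, |\beta|_v, |\gamma|_v, |\delta|_v$ place-by-place via the triangle inequality applied to the explicit determinantal formula for $s$ in terms of its action on three points—has already been carried out in the proof of Proposition~\ref{prop:HeightBound}. The purpose of the corollary is simply to repackage that bound in the language of the specific invariant pairs constructed in \S\ref{Sec: Invariant Pair}, so that it can be used as an effective terminating condition (involving the coefficients of $\phi$ and $\psi$ alone) in the CRT-based algorithm that follows in Section~\ref{Sec: Number Fields}.
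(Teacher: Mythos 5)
Your proposal is correct and matches the paper's intent exactly: the paper states the corollary immediately after Proposition~\ref{prop:HeightBound} without a separate proof, precisely because—as you observe—any $s\in\Conj_{\phi,\psi}(K)$ satisfies $s(T_\phi)=T_\psi$ by the definition of an invariant pair, the Galois-stability hypothesis allows $f_{T_\phi},f_{T_\psi}$ to be taken with coefficients in $K$, and then the bound is a direct specialization of the proposition with $T=T_\phi$ and $T'=T_\psi$.
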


	We take this height bound $6^{[K:\QQ]} L_2(f_{T_\phi})^3L_2(f_{T_\psi})^3$ to be our explicit constant $M$.  Now we need to show that if $\prod_{i} \textup{N}(v_i) \geq 2^{[K:\Q]}M^2$, then each element of $\Conj_{\phi, \psi}(K)$ is a lift of an element of $\prod_{i}\Conj_{\phi, \psi}(\F_{v_i})$ of minimal height.  We will need the following two lemmas.

	\begin{lemma}\label{lemma:MinHeightBound}
		Let $\mathfrak{b}$ be a nonzero fractional ideal of $\OO_K$, and write it as a quotient $\mathfrak{b} = \mathfrak{b^+} / \mathfrak{b^-}$ of relatively prime integral ideals. Then $H_K(b) \ge \textup{N}(\mathfrak{b^+})$ for all nonzero $b \in \mathfrak{b}$.
	\end{lemma}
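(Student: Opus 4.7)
The plan is to translate the hypothesis $b \in \mathfrak{b}$ into local inequalities on $\mathfrak{p}$-adic valuations, and then to use the product formula to rewrite $H_K(b)$ in a form where only the ``numerator'' ideal $\mathfrak{b^+}$ appears naturally.

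First I would observe that, since $\mathfrak{b}$ is an $\OO_K$-submodule of $K$ containing $b$, the principal fractional ideal $(b) = b\OO_K$ sits inside $\mathfrak{b}$. At every nonzero prime $\mathfrak{p}$ of $\OO_K$, this containment reads $v_{\mathfrak{p}}(b) \geq v_{\mathfrak{p}}(\mathfrak{b})$. The coprimality of $\mathfrak{b^+}$ and $\mathfrak{b^-}$ implies that either $v_{\mathfrak{p}}(\mathfrak{b}) = v_{\mathfrak{p}}(\mathfrak{b^+}) \geq 0$ (when $\mathfrak{p} \mid \mathfrak{b^+}$) or $v_{\mathfrak{p}}(\mathfrak{b^+}) = 0$; in either case I obtain the uniform bound
\[
\max\bigl(v_{\mathfrak{p}}(b),\, 0\bigr) \;\geq\; v_{\mathfrak{p}}(\mathfrak{b^+}).
\]

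Next, I would apply the product formula $\prod_v |b|_v^{[K_v:\Q_v]} = 1$ together with the elementary identity $\max(|x|, 1) \cdot \min(|x|, 1) = |x|$ to rewrite
\[
H_K(b) \;=\; \prod_v \max(|b|_v, 1)^{[K_v:\Q_v]} \;=\; \prod_v \max(|b|_v^{-1}, 1)^{[K_v:\Q_v]}.
\]
At a finite place $v$ corresponding to $\mathfrak{p}$, the standard normalization $|b|_v^{-[K_v:\Q_v]} = \textup{N}(\mathfrak{p})^{v_{\mathfrak{p}}(b)}$ makes the local factor equal to $\textup{N}(\mathfrak{p})^{\max(v_{\mathfrak{p}}(b), 0)}$, which by the first step is at least $\textup{N}(\mathfrak{p})^{v_{\mathfrak{p}}(\mathfrak{b^+})}$. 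Archimedean factors contribute at least $1$. Multiplying over all places will then yield
\[
H_K(b) \;\geq\; \prod_{\mathfrak{p}} \textup{N}(\mathfrak{p})^{v_{\mathfrak{p}}(\mathfrak{b^+})} \;=\; \textup{N}(\mathfrak{b^+}),
\]
as desired.

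There is no deep obstacle here; the only mildly delicate point is keeping normalization conventions straight, namely checking that $|\cdot|_v^{[K_v:\Q_v]}$ agrees with $\textup{N}(\mathfrak{p}_v)^{-v_{\mathfrak{p}_v}(\cdot)}$ at finite places. The real conceptual content is the rewriting via the product formula, which converts the problem from bounding $|b|_v$ from above at archimedean places into one of bounding $|b^{-1}|_v$ from above at non-archimedean places, where the hypothesis $b \in \mathfrak{b}$ supplies exactly the information we need.
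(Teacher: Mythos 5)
Your argument is correct and is essentially the same as the one in the paper: both translate $b \in \mathfrak{b}$ into lower bounds on $v_{\mathfrak{p}}(b)$, then use the product formula to move the height contribution entirely onto finite places where those bounds say something. The only cosmetic difference is that you pass to the exact identity $H_K(b) = \prod_v \max(|b|_v^{-1},1)^{[K_v:\QQ_v]}$ (equivalently $H_K(b)=H_K(1/b)$) before applying the local bounds, whereas the paper discards the non-contributing finite factors first and then invokes the product formula on the resulting inequality; the underlying mechanism is the same.
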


	\begin{proof}
		Since $b \in \mathfrak{b}$, we have $|b|_v \le 1$ for any finite place $v$ such that $v(\mathfrak{b}) \geq 0$.  Therefore 
	\begin{align*}
		H_K(b) &= \prod_{v\mid\infty} \max\{1, |b|_v\}^{[K_v:\Q_v]} 
			\prod_{\substack{v\nmid \infty \\ v(\mathfrak{b}) < 0}} 
				\max\{1, |b|_v\}^{[K_v:\Q_v]}  \\
			&\geq \prod_{v\mid\infty} |b|_v^{[K_v:\Q_v]} 
				\prod_{\substack{v\nmid \infty \\ v(\mathfrak{b}) < 0}} 
				 |b|_v^{[K_v:\Q_v]} 
			= \prod_{\substack{v\nmid \infty \\ v(\mathfrak{b}) \geq 0}} |b|^{-[K_v:\Q_v]}_v,
	\end{align*}
where the last equality follows from the product formula.  Let $e_{\frakp}$ be such that $\mathfrak{b} = \prod\frakp^{e_{\frakp}}$.  Since $b \in \mathfrak{b}$, $v(b) \ge e_{\frakp_v}$, so $|b|_v^{-{[K_v:\Q_v]}}\ge \textup{N}(\frakp_v)^{e_{\frakp_v}}$.  
	\end{proof}

	\begin{lemma}\label{lem:lifts}
		Let $\fraka\subset\OO_K$ be an integral ideal, and let $\rho_{\fraka}\colon\PP^n(\OO_K)\to \PP^n(\OO_K/\fraka)$ denote the canonical projection.  For each ${b} = (b_0:b_1:\cdots:b_n) \in \PP^n(\OO_K/\fraka)$, there is at most one element ${a} = (a_0:a_1:\cdots:a_n) \in \rho_{\fraka}^{-1}({\beta})$ with $H_K({a}) <\left(2^{-[K:\QQ]}\textup{N}(\fraka)\right)^{1/2}$.
	\end{lemma}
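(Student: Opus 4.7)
The plan is to argue by contradiction: assume $a=(a_0:a_1:\cdots:a_n)$ and $a'=(a'_0:a'_1:\cdots:a'_n)$ are two distinct elements of $\rho_{\fraka}^{-1}(b)$ both satisfying $H_K<(2^{-[K:\QQ]}\textup{N}(\fraka))^{1/2}$. After fixing integral representatives $(a_i),(a'_i)\in\OO_K^{n+1}$, the fact that $a\neq a'$ as points of $\PP^n(K)$ forces at least one Pl\"ucker coordinate $c_{ij}:=a_ia'_j-a_ja'_i$ to be nonzero in $\OO_K$. The goal is then to produce incompatible upper and lower bounds on $H_K(c_{ij})$.

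For the lower bound, I would first show $c_{ij}\in\fraka$ for every $i,j$. Using the CRT decomposition $\OO_K/\fraka \cong \prod_{\frakp}\OO_K/\frakp^{e_{\frakp}}$, in each local factor the images of $(a_i)$ and $(a'_i)$ represent the same point of $\PP^n(\OO_K/\frakp^{e_{\frakp}})$, hence they are proportional by a local unit $\bar u$. Lifting $\bar u$ to $u\in\OO_K$ gives $a_i\equiv ua'_i \pmod{\frakp^{e_{\frakp}}}$, so $c_{ij}\equiv 0\pmod{\frakp^{e_{\frakp}}}$, and gluing yields $c_{ij}\in\fraka$. Applying Lemma~\ref{lemma:MinHeightBound} to the integral ideal $\fraka$ (for which $\fraka^{+}=\fraka$) then produces $H_K(c_{ij})\geq \textup{N}(\fraka)$.

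For the upper bound I would exploit the triangle and ultrametric inequalities to get $|c_{ij}|_v\leq \varepsilon_v\|a\|_v\|a'\|_v$, where $\|a\|_v:=\max_k|a_k|_v$ and $\varepsilon_v=2$ at archimedean places, $\varepsilon_v=1$ otherwise. Combining the elementary estimate $\max\{1,\varepsilon_v xy\}\leq \varepsilon_v\max\{1,x\}\max\{1,y\}$ with a suitably normalized primitive representative (chosen so that $\prod_v\max\{1,\|a\|_v\}^{[K_v:\QQ_v]}=H_K(a)$) yields the Segre-type estimate
\[
H_K(c_{ij})\;\leq\;2^{[K:\QQ]}H_K(a)H_K(a').
\]
Against the lower bound this forces $H_K(a)H_K(a')\geq 2^{-[K:\QQ]}\textup{N}(\fraka)$, contradicting the assumption that both heights are strictly below $(2^{-[K:\QQ]}\textup{N}(\fraka))^{1/2}$. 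The main obstacle is the normalization step in the Segre estimate: over a general number field one cannot always find a primitive representative in $\OO_K^{n+1}$ with $\|a\|_v\geq 1$ at every archimedean place, so care is required either by scaling within $\OO_K^{\times}$ via Dirichlet's unit theorem or by absorbing a bounded class-group-dependent factor; the PID case $K=\QQ$ is immediate.
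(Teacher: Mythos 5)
You take a genuinely different route from the paper: you work with the Pl\"ucker coordinates $c_{ij}=a_ia'_j-a_ja'_i$ of the two lifts and aim to squeeze $H_K(c_{ij})$ between incompatible bounds, whereas the paper fixes a single index $i_0$ with $a_{i_0}\notin\fraka$ and works with the representative\mbox{-}independent differences $\frac{a_i}{a_{i_0}}-\frac{a'_i}{a'_{i_0}}$.  Your lower\mbox{-}bound step is sound: the CRT argument showing $c_{ij}\in\fraka$ is correct, and applying Lemma~\ref{lemma:MinHeightBound} to the integral ideal $\fraka$ (so $\mathfrak{b}^+=\fraka$) does give $H_K(c_{ij})\ge\textup{N}(\fraka)$ for nonzero $c_{ij}$.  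The gap is exactly where you flag it, and it is not merely a technicality one can wave away.

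The upper bound $H_K(c_{ij})\le 2^{[K:\QQ]}H_K(a)H_K(a')$ is in fact \emph{false} for arbitrary integral representatives of $a$ and $a'$.  The problem is that $H_K(c_{ij})=\prod_{v\mid\infty}\max\{1,|c_{ij}|_v\}^{[K_v:\QQ_v]}$ is sensitive to the chosen representatives of $a$ and $a'$ (scaling a representative by a unit rescales each $c_{ij}$ but does not leave this affine height invariant), whereas $H_K(a)$ and $H_K(a')$ are representative\mbox{-}independent.  Concretely, over $K=\QQ(\sqrt2)$ with fundamental unit $u=1+\sqrt2$, take $a$ represented by $(u^{-N},u^{-N+1})$ and $a'=(1,1)$; then $a=(1:u)$ has $H_K(a)=1+\sqrt2$ and $H_K(a')=1$, but $c_{01}=u^{-N}(1-u)$ satisfies $H_K(c_{01})=(1+\sqrt2)^{N}\sqrt2$, which exceeds $2^{[K:\QQ]}H_K(a)H_K(a')$ as soon as $N$ is a little larger than $2$ and grows without bound with $N$.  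To make the Segre\mbox{-}type estimate hold with your intended constant, one would need a representative that is simultaneously primitive at every finite place \emph{and} has $\max_k|a_k|_v\ge1$ at every archimedean place; over a general number field such a representative may fail to exist, and one can only approximate this normalization up to factors controlled by the ideal class group and the regulator. Those hidden factors would change the constant $2^{-[K:\QQ]}$ appearing in the statement you are trying to prove, so your own suggested fixes do not close the gap without weakening the lemma.

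The paper sidesteps the issue entirely: the quantity $\prod_v\max_{\ell}\bigl|\tfrac{a_\ell}{a_{i_0}}\bigr|_v^{[K_v:\QQ_v]}$ \emph{equals} the projective height $H_K(a)$ on the nose (no choice of representative enters), so the analogue of your Segre estimate,
\[
H_K\!\left(\frac{a_i}{a_{i_0}}-\frac{a'_i}{a'_{i_0}}\right)\le 2^{[K:\QQ]}H_K(a)H_K(a'),
\]
comes out clean from the triangle inequality exactly as in Proposition~\ref{prop:HeightBound}.  The lower bound is then obtained by observing that the difference lies in the fractional ideal $(a_{i_0}a'_{i_0})^{-1}\fraka$ and invoking Lemma~\ref{lemma:MinHeightBound}.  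If you want to salvage your Pl\"ucker\mbox{-}coordinate plan, you would need to pass from the affine heights $H_K(c_{ij})$ of individual coordinates to the projective height of the full Pl\"ucker point $(c_{ij})_{i<j}$ and redo the lower bound there, which is more delicate than what you have written.
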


	\begin{proof}
		Let ${a}, {a}'\in\PP^n(\OO_K)$ be such that 
			$H_K(a), H_K(a') < \left(2^{-[K:\QQ]}\textup{N}(\fraka)\right)^{1/2}$ 
	and such that $\rho_{\fraka}(a) = \rho_{\fraka}(a').$  Since $a \in \PP^n(\OO_K)$, there exists a coordinate $i_0$ such that $a_{i_0} \not\in \fraka$. It follows that $a_{i_0}' \not\in \fraka$ too. 

	Then for each $i$ and each place $v$, an argument as in Proposition~\ref{prop:HeightBound} shows that
	\[
			\max \left\{1, \left|\frac{a_i}{a_{i_0}} 
				- \frac{a_i'}{a_{i_0}'}\right|_v  \right\} 
				\leq  2 \ 
				\max_{\ell}\left\{ \left| \frac{a_\ell}{a_{i_0}}\right|_v\right\} \cdot
				\max_\ell \left\{ \left| \frac{a_\ell'}{a_{i_0'}}\right|_v\right\} .
	\]
Taking the product over all $v$ gives 
$H_K\left( \frac{a_i}{a_{i_0}} - \frac{a_i'}{a_{i_0}'} \right) \leq 2^{[K:\QQ]} H_K(a) H_K(a')$. The latter is less than $\textup{N}(\fraka)$ by hypothesis, and $\frac{a_i}{a_{i_0}} - \frac{a_i'}{a_{i_0}'}$ lies in the fractional ideal $(a_{i_0}a_{i_0}')^{-1}\fraka$, so the preceding lemma implies that $\frac{a_i}{a_{i_0}} = \frac{a_i'}{a_{i_0}'}$.  That is, $a = a'$. 
	\end{proof}

	\begin{prop}
		Let $v_0, \ldots, v_n$ be finite places of $K$ such that
		\begin{enumerate}
			\item $\phi$ and $\psi$ have good reduction at $v_i$ for all $i$; 
			\item the reduction map $\Conj_{\phi, \psi}(K) \to \Conj_{\phi, \psi}(\F_{v_i})$ 
				is injective for all $i$; and
			\item $\prod_i\textup{N}(v_i)  \geq 2^{[K:\QQ]}M^2$, where 
				$M = 6^{[K:\QQ]} L_2(f_{T_\phi})^3L_2(f_{T_\psi})^3$ 
				as in Corollary~$\ref{cor:height_bound}$.
		\end{enumerate}
		For any tuple $(g_i)\in \prod_i\Conj_{\phi, \psi}(\F_{v_i})$, let $g_K \in \Aut({\PP^1_K})$ be a simultaneous lift of each $g_i$ of minimal height.  If  $(g_i) \in\textup{im}\left(\Conj_{\phi, \psi}(K) \to \prod_i \Conj_{\phi, \psi}(\F_{v_i})\right)$, then $g_K \in\Conj_{\phi, \psi}(K)$.
	\end{prop}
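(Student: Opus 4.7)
The plan is to invoke Lemma~\ref{lem:lifts} with $\fraka := \prod_i v_i$ in order to identify the given minimal-height lift $g_K$ with the specific witness $s \in \Conj_{\phi,\psi}(K)$ whose existence is guaranteed by hypothesis. Concretely, because $(g_i)$ lies in the image of $\Conj_{\phi,\psi}(K) \to \prod_i \Conj_{\phi,\psi}(\F_{v_i})$, I fix such an $s$ with $\red_{v_i}(s) = g_i$ for every $i$. Corollary~\ref{cor:height_bound} then yields $H_K(s) \leq M$, and since $g_K$ is a simultaneous lift of the tuple $(g_i)$ of minimal height, I also get $H_K(g_K) \leq H_K(s) \leq M$.

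Next, using the embedding $\PGL_2 \hookrightarrow \PP^3$ from \S\ref{Sec: Aut Properties}, I view both $s$ and $g_K$ as points of $\PP^3(\OO_K)$ (clearing denominators globally). The Chinese remainder theorem furnishes a canonical isomorphism $\OO_K / \fraka \cong \prod_i \F_{v_i}$, and since $s$ and $g_K$ reduce to the same element $g_i$ at each $v_i$, they share a common image in $\PP^3(\OO_K/\fraka)$. Condition~(3) rearranges to $\bigl(2^{-[K:\QQ]}\textup{N}(\fraka)\bigr)^{1/2} \geq M \geq \max\{H_K(s), H_K(g_K)\}$, so Lemma~\ref{lem:lifts} applies and forces $s = g_K$ in $\PP^3(K)$. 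In particular $g_K = s \in \Conj_{\phi,\psi}(K)$, which is what we wanted.

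The main obstacle, such as it is, is purely a bookkeeping one: Lemma~\ref{lem:lifts} is stated with a strict inequality on heights, whereas Corollary~\ref{cor:height_bound} combined with condition~(3) gives only a non-strict bound. This is easily circumvented, for instance by taking strict inequality in~(3), which is essentially free in practice since the CRT algorithm enlarges the prime set until the product of norms strictly exceeds the threshold. Condition~(2) is not directly exercised in the argument above, but is consistent with (and in fact implied by) the uniqueness conclusion our proof produces.
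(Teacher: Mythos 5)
Your argument is correct and follows essentially the same route as the paper's own proof: fix a preimage $s\in\Conj_{\phi,\psi}(K)$ of the tuple $(g_i)$, bound $H_K(s)\le M$ via Corollary~\ref{cor:height_bound}, and conclude $s=g_K$ by invoking Lemma~\ref{lem:lifts} with $\fraka=\prod_i \frakp_{v_i}$. Your two side-remarks are both well taken and also apply to the paper's argument: the strict--vs.--non-strict mismatch between condition~(3) and the hypothesis of Lemma~\ref{lem:lifts} is a genuine (if harmless) slip, and hypothesis~(2) is not actually used in deriving the stated conclusion, since any preimage $s$ of $(g_i)$ suffices for the height-plus-CRT argument; its role in the paper is only to make the preimage unique, which the proposition does not need.
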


	\begin{proof}
		Assume that $(g_i) \in\textup{im}\left(\Conj_{\phi, \psi}(K) \to \prod_i \Conj_{\phi, \psi}(\F_{v_i})\right)$ and let $g'\in\Conj_{\phi, \psi}(K)$ denote its pre-image.  (The element $g'$ is unique by assumption $(2)$.)  By Corollary~\ref{cor:height_bound}, $H_K(g') \leq M \leq \left(2^{-[K:\QQ]}\prod_i\textup{N}(v_i)\right)^{1/2}$.  By Lemma~\ref{lem:lifts}, $g'$ must have minimal height among all lifts, so $g' = g_K \in \Conj_{\phi, \psi}(K)$.
	\end{proof}

	\begin{algorithm}[h]
		\begin{flushleft}
			Input: a number field $K$ and rational functions $\phi, \psi \in K(z)$ of degree $d > 1$

			Output: the set $\Conj_{\phi,\psi}(K)$ \\
			\medskip

			choose an invariant pair $T_\phi, T_\psi$ as in Section~\ref{Sec: Invariant Pair} and set
				$M = 6^{[K:\Q]}L_2(f_{T_\phi})^3L_2(f_{T_\psi})^3$ \\

			\medskip

			create an empty list $L$, and 
				set $\fraka = \langle 1\rangle$\\
			for $v$ a prime of good reduction at $v$ such that
				$\Conj_{\phi,\psi}(K)\to\Conj_{\phi,\psi}(\FF_v)$ is injective:\\
				\tab compute $\Conj_{\phi, \psi}(\FF_v)$  \\ 
				\tab if $\Conj_{\phi, \psi}(\FF_v) = \varnothing$:\\
				\tab \tab return $\varnothing$\\
				\tab else:\\
				\tab \tab append $\Conj_{\phi, \psi}(\FF_v)$ to L, 
					 and set $\fraka = \fraka\frakp_v$\\
				\tab Set $L' = CRT(L)$ and initialize an empty list \texttt{Conjs}\\
				\tab for $s$ in $L'$:\\
				\tab \tab set $s'\in \textup{PGL}_2(\OO_K)$ to be a lift of $s$ of minimal height\\
				\tab \tab if $H_K(s') \le M$ and $s' \circ \phi = \psi \circ s' $:\\
				\tab \tab \tab append $s'$ to \texttt{Conjs}\\
				\tab if $\textup{N}(\fraka) \geq 2^{[K:\QQ]} M^2$ or if $\#\text{\texttt{Conjs}} = \#\Conj_{\phi, \psi}(\F_{v})$ for any $v\mid\fraka$:\\
				\tab \tab return \texttt{Conjs}

		\end{flushleft}

		\caption{--- Computation of $\Conj_{\phi,\psi}(K)$ via the Chinese Remainder Theorem}
		\label{Alg:NumberField}
	\end{algorithm}

		There are a few technical details that we have left out in our description of Algorithm~\ref{Alg:NumberField} in the case that $\phi = \psi$, specifically in  where we decide whether to terminate and in the Chinese Remainder Theorem step.  These details allow us to avoid extraneous computation.  We give an example here, and the curious reader can find the rest in our source code.

	It is possible for the reduction of $\Aut_{\phi}(K)$ to be a proper subgroup of $\Aut_{\phi}(\F_v)$ for all places $v$ of good reduction.  Consider the rational function $\phi(z) = 2z^5$.  One can use the method of invariant sets to check that
		\[ 
			\Aut_{\phi}(\overline{\Q}) = \left\{z, iz, -z, -iz, (\sqrt2z)^{-1}, i(\sqrt2z)^{-1}, 
			-(\sqrt2z)^{-1}, -i(\sqrt2z)^{-1}\right\}, 
		\]
which is a dihedral group of order~8. For all primes $p>2$, at least one of $-1, 2, -2$ is a square in $\F_p$.  Therefore, $\Aut_{\phi}(\F_p)$ always contains $\Z/2\times\Z/2$ or $\Z/4$ as a subgroup.  As the algorithm is stated, we would compute a lift of every element in $\prod_{p = 5}^{19} \Aut_{\phi}(\F_p)$.  However, by $p = 7$ one can already recognize that $\Aut_{\phi}(\Q) \subseteq \Z/2$ since $\Aut_{\phi}(\F_5) = \Z/4$ and $\Aut_{\phi}(\F_7) = \Z/2 \times\Z/2$.  Our code checks for group-theoretic properties like this when deciding whether to terminate.

		When computing $\Conj_{\phi, \psi}(K)$, it is important to build in as many early termination conditions as possible, since typically the elements of $\Conj_{\phi, \psi}(K)$ have significantly smaller height than the theoretical bound~$M$.  This is of course true when $\Conj_{\phi, \psi}(K)$ is trivial, but it remains true even in the nontrivial case.  For example, consider the functions in the last line of Table~\ref{table:nontrivial_auts}.  The height bound for $\phi(z) = 345025251z^6$ is over $50$ digits, while, in contrast, the height of the non-trivial automorphism is $2601$.  The same phenomenon can be seen with many of the examples in Table~\ref{table:ConjugatesOfPowerMaps}.


\subsection{An Early Termination Criterion} \label{sec: early termination}
In order to avoid extraneous computation, we want to detect as quickly as possible when two rational functions are \textit{not} conjugate. The method of invariant sets suggests a useful criterion. 

	Let $a \in F \smallsetminus \{0\}$, let $f_1, \ldots, f_r \in F[X,Y]$ be pairwise non-commensurate irreducible homogeneous polynomials, and let $e_1, \ldots, e_r \geq 1$ be integers. We define the \textbf{factorization type} (or simply \textbf{type}) of the polynomial $f:= af_1^{e_1} \cdots f_r^{e_r}$ to be the multiset of pairs $\left\{(\deg(f_1), e_1), \cdots, (\deg(f_r), e_r) \right\}$. Note that the degree of $f$ is determined by its type. The definition of type extends in the obvious way to inhomogeneous univariate polynomials.
	
	Now suppose that $\phi, \psi \in F(z)$ are rational functions of degree $d \geq 2$ such that $\Conj_{\phi, \psi}(F)$ is nonempty. We saw in \S\ref{Sec: Invariant Pair} that for each $s \in \Conj_{\phi, \psi}(F)$, we have $s(\Fix(\phi)) = \Fix(\psi)$. In fact, more is true.  Write $\phi$ and $\psi$ in homogeneous form as $\Phi = (\Phi_0(X,Y), \Phi_1(X,Y))$ and $\Psi = (\Psi_0(X,Y), \Psi_1(X,Y))$. The polynomials $f_\phi = X\Phi_1  - Y\Phi_0$ and $f_\psi = X\Psi_1 - Y\Psi_0$ determine the fixed points of $\phi$ and $\psi$, respectively. Writing $s$ in homogeneous form as $S = (S_0(X,Y), S_1(X,Y))=(\alpha X + \beta Y, \gamma X + \delta Y)$, the condition $s \circ \phi \circ s^{-1} = \psi$ may be translated as $S \circ \Phi = \lambda \cdot \Psi \circ S$ for some $\lambda \in F^\times$. We now see that
	\begin{equation}
	\label{Eq: Move the fixed points}
		\begin{aligned}
			\lambda f_{\psi}(S_0, S_1) &= \lambda \left[S_0 \cdot \left(\Psi_1 \circ S\right) - S_1 \cdot \left(\Psi_0 \circ S\right)\right] \\
				&= S_0 \cdot \left( \gamma \Phi_0 + \delta \Phi_1\right) - S_1 \cdot \left( \alpha \Phi_0 + \beta \Phi_1\right) \\
				&= (\alpha\delta - \beta\gamma) \left(X \Phi_1 - Y \Phi_0\right) = (\alpha\delta - \beta\gamma) f_\phi.
		\end{aligned}
	\end{equation}
Hence the types of $f_\phi$ and $f_\psi$ agree. Said another way, if the types of the polynomials $f_\phi$ and $f_\psi$ do not match, then $\Conj_{\phi, \psi}(F)$ is empty. (Since $s(\phi^{-n}(\Fix(\phi))) = \psi^{-n}(\Fix(\psi))$ for every $n \geq 1$, a similar statement holds for the polynomials defining the $n{\tth}$ preimages of the fixed points.) 	
		
	Assume now that $F = \FF_q$ is the finite field with $q$ elements. By definition, the type of a homogeneous polynomial $f \in \FF_q[X,Y]$ is computed by factoring it completely. However, there are well known ``folk methods'' for calculating the type of $f$. Using only formal derivatives and the Euclidean algorithm, one can determine the number of irreducible factors of a given degree and the exponents to which they occur in $f$. (See \cite[\S2]{Cantor-Zassenhaus}.)

	If $F = K$ is a number field, then factoring $f_\phi$ and $f_\psi$ may not be computationally efficient. An alternative approach is suggested by the Chinese Remainder Theorem method for computing $\Conj_{\phi, \psi}(K)$. Let $v$ be a non-Archimedean place of $K$ at which both $\phi$ and $\psi$ have good reduction. Then each element of $\Conj_{\phi, \psi}(K)$ has good reduction at $v$, and we may reduce equation~\eqref{Eq: Move the fixed points} modulo~$v$ to obtain a relation between the fixed point polynomials of $\phi_v$ and $\psi_v$, the corresponding rational functions defined over the residue field $\FF_v$. If $\Conj_{\phi, \psi}(K)$ is nonempty, then for each place of good reduction $v$ for $\phi$ and $\psi$, the types of the polynomials $f_\phi$ and $f_\psi$ must agree modulo $v$.\footnote{When $f_\phi$ and $f_\psi$ are irreducible, it is equivalent to say that the splitting fields of $f_\phi$ and $f_\psi$ have the same Dedekind zeta function \cite{Perlis_Stuart_1995}. One says that these splitting fields are ``arithmetically equivalent.''} Algorithm~\ref{Alg:NumberField} provides a collection of places $v$ that are sufficient to compute the full set $\Conj_{\phi, \psi}(K)$ via the Chinese Remainder Theorem; one could use this set of places $v$ for our early termination criterion as well. 
	
\section{Algorithms for computing automorphisms}\label{Sec: AutAlgorithms}

	If $s$ is a non-trivial automorphism of $\phi$, then the action of $\phi$ on the fixed points of $s$ is highly restricted.  We exploit this restriction to give faster algorithms for computing the automorphism group.

\subsection{Method of Fixed Points}

	Let $F$ be a field and let $\phi: \PP^1_{F} \to \PP^1_{F}$ a nonconstant morphism. We assume that either $F$ is finite, or $\textup{char}(F)\nmid d^3 - d$.  For any $\phi$-periodic point $x \in \PP^1(\alg F)$, write $\per(x)$ for its exact period --- i.e., the minimum positive integer $i$ such that $\phi^i(x) = x$. If $x$ is not periodic,  write $\per(x) = +\infty$.  For each pair of integers $i, j \in \{1,2\}$, define the following set:
	\begin{equation}
	\label{Eq: Various sets}
		Z_{i,j} = \{x \in \PP^1(\alg F) : \per(x) = i, \ [F(x): F] = j\}.
	\end{equation}
We also define the following set of ordered pairs:
	\begin{equation}
	\label{Eq: One more set}
		W = \{(x,y) : x \in Z_{1,1}, \ y \in \phi^{-1}(x), \ [F(y): F] = 1\}.
	\end{equation}
(More concretely, $W$ is the set of pairs of $F$-rational points such that $x$ is fixed by $\phi$ and $\phi(y) = x$.) These sets may be constructed by factoring the polynomials that define the fixed points of $\phi$, the points of period~2, and the preimages of $F$-rational points. We write $Z^{(2)}$ for the set of unordered pairs of elements of a set~$Z$. 

	Let $s = \mat{\alpha & \beta \\ \gamma & \delta}$ be a nontrivial element of $\Aut_{\phi}(F)$. The homogeneous polynomial defining the fixed points of $s$ is $\gamma X^2 + (\delta - \alpha)XY - \beta Y^2$.  	Suppose that $s \in \Aut_\phi(F)$ has precisely two distinct fixed points $x_1$ and~$x_2$. Then $s(\phi(x_1)) = \phi(s(x_1)) = \phi(x_1)$, so that $\phi(x_1) \in \{x_1,x_2\}$. There are three possible cases: 
	\begin{enumerate}
		\item $\phi$ fixes both $x_1$ and $x_2$; 
		\item $\phi$ swaps $x_1$ and $x_2$; or 
		\item $\phi(x_1) = x_2$ and $\phi$ fixes $x_2$ (perhaps after interchanging $x_1$ and $x_2$). 
	\end{enumerate}
	Since $\phi$ is defined over $F$, all Galois conjugates of a fixed point must also be fixed points.  Thus in cases (1) and (2), either  $x_1$ and $x_2$ are both $F$-rational, or  they are quadratic conjugates over $F$. In case (3), both $x_1$ and $x_2$ must be $F$-rational. 

	If $x_1$ and $x_2$ are both $F$-rational in case (1) --- so that $(x_1,x_2) \in Z_{1,1}^{(2)}$ --- then we may select $u \in \PGL_2(F)$ such that $u(x_1) = \infty$ and $u(x_2) = 0$. Then $usu^{-1} = \mat{ \zeta & \\ & 1}$ for some root of unity $\zeta \in F$.  If $\zeta$ has order~$n$, then $n$ divides one of $d$, $d+1$, or $d-1$ by Proposition~\ref{Prop: Geometric Bound}.  Let $T$ be the set of roots of unity in $F$ that have order dividing $d$, $d+1$ or $d-1$. We loop over all distinct unordered pairs of elements $(x_1,x_2) \in Z_{1,1}^{(2)}$, and check which elements of $u^{-1} \mat{T & \\ & 1} u$ lie in $\Aut_\phi(F)$. See the first for--loop of Algorithm~\ref{Alg: Fixed Point Method}. In fact, this strategy works in case (2) when $x_1, x_2$ are both $F$-rational, and in case (3). These correspond to looping over pairs $(x_1,x_2)$ in $Z_{2,1}^{(2)}$ and in $W$, respectively.

	Now suppose $x_1$ and $x_2$ are quadratic Galois conjugates  in case (1), so that $(x_1,x_2) \in Z_{1,2}^{(2)}$.  We will use the following lemma.
	
	\begin{lemma}
		Let $(z_1,z_2)\in Z_{1,2}^{(2)}$.  There exists an order $n$ element $s\in\Aut_{\PP^1}(F)$ such that $\Fix(s) = \{z_1,z_2\}$ if and only if either $n = 2$ and $\mathrm{char}(F) \neq 2$ or else $F(\mu_n) = F(z_1,z_2)$.
	\end{lemma}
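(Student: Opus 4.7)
The plan is to diagonalize $s$ by passing to the quadratic extension $E := F(z_1, z_2)$ and then read off the descent condition imposed by $F$-rationality. Fix $u \in \PGL_2(E)$ with $u(z_1) = 0$ and $u(z_2) = \infty$. If $s \in \PGL_2(F)$ satisfies $\Fix(s) = \{z_1, z_2\}$, then $s' := u s u^{-1} \in \PGL_2(E)$ fixes $\{0, \infty\}$, so $s'(z) = \alpha z$ for a unique $\alpha \in E^{\times}$, and the order of $s$ equals $n$ precisely when $\alpha$ is a primitive $n$th root of unity.

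The key computation is to translate $s^\sigma = s$, where $\sigma$ generates $\mathrm{Gal}(E/F)$, into a condition on $\alpha$. Since $\sigma$ interchanges $z_1$ and $z_2$, the cocycle $v := u^\sigma u^{-1} \in \PGL_2(E)$ swaps $0$ and $\infty$; hence $v(z) = c/z$ for some $c \in E^{\times}$. Applying $\sigma$ to $s = u^{-1} s' u$ and rearranging gives $(s')^\sigma = v s' v^{-1}$, and a direct calculation shows $v s' v^{-1}(z) = \alpha^{-1} z$, while $(s')^\sigma(z) = \sigma(\alpha) z$. Thus the existence of $s$ is equivalent to the existence of a primitive $n$th root of unity $\alpha \in E$ satisfying $\sigma(\alpha) = \alpha^{-1}$.

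Finally, I would analyze this condition in two cases. If $n = 2$, then $\alpha = -1$ is the only candidate; it lies in $F$ exactly when $\mathrm{char}(F) \neq 2$, in which case $\sigma(\alpha) = \alpha^{-1}$ is automatic. If $n \geq 3$, then $\sigma(\alpha) = \alpha^{-1}$ forces $\alpha \neq \alpha^{-1}$, so $\alpha \notin F$, and combining $\alpha \in E$ with $[E:F] = 2$ yields $F(\alpha) = E$. Since $\alpha$ generates $\mu_n$, this shows $F(\mu_n) = F(z_1, z_2)$. The converse direction follows by recognizing $\sigma$ as the inversion automorphism on $\mu_n$ once the field equality holds, and choosing any primitive $n$th root of unity in $E$ for $\alpha$.

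The main obstacle is the careful bookkeeping of the Galois action on compositions in $\PGL_2(E)$ needed to arrive cleanly at the identity $\sigma(\alpha) = \alpha^{-1}$; once this calibration is in hand, the rest of the argument is a routine analysis of multiplicative orders inside $E$.
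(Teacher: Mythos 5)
Your proposal follows the paper's route very closely: both conjugate the fixed points to $\{0,\infty\}$ over $E = F(z_1,z_2)$, write the normalized automorphism as multiplication by a primitive $n$th root of unity $\alpha\in E$, and translate $F$-rationality into $\sigma(\alpha)=\alpha^{-1}$ via the Galois cocycle. The paper stops at this criterion and asserts ``which completes the proof''; you have filled in the final casework, which is the right instinct.

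However, your converse step contains a genuine gap. You claim that once $F(\mu_n)=E$ with $[E:F]=2$, the nontrivial $\sigma\in\Gal(E/F)$ necessarily acts on $\mu_n$ as inversion. This is false in general. Take $F=\QQ(i)$ and $n=8$, so $E=F(\mu_8)=\QQ(\zeta_8)$: here $\Gal(E/F)$ is generated by $\sigma\colon\zeta_8\mapsto\zeta_8^{5}$, an involution on $\mu_8$ that is not $-1$, and consequently no primitive $8$th root of unity in $E$ satisfies $\sigma(\alpha)=\alpha^{-1}$. In fact, no order-$8$ element of $\PGL_2(\QQ(i))$ has $\Fix(s)=\{\sqrt{2},-\sqrt{2}\}$, even though $F(\mu_8)=F(\sqrt{2},-\sqrt{2})$. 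The underlying issue is that $(\ZZ/n\ZZ)^\times$ can contain involutions other than $-1$ when $n$ is not an odd prime power; the correct criterion emerging from your own computation is that $\sigma$ acts as $-1$ on $\mu_n$ (equivalently, some primitive $n$th root of unity in $E$ has $E/F$-norm $1$), not merely that $F(\mu_n)=E$. To be fair, the paper's proof simply stops at ``$\xi\xi^\sigma=1$'' and never justifies the final reduction, so the gap originates in the lemma's statement rather than in your argument; but your last sentence is where the gap becomes an explicit false claim, and the reverse implication does need an additional hypothesis on $n$ or on the Galois action before it can be concluded.
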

	\begin{proof}
		Let $u := \begin{pmatrix} 1 & - z_1\\ 1 & -z_2\end{pmatrix}$.  Then there exists such an $s$ if and only if
		\begin{equation}\label{eq:DefnOfs}
			s = u^{-1}\begin{pmatrix} \xi & \\ & 1\end{pmatrix} u 
			= \begin{pmatrix}
				z_1 - \xi z_2 & (\xi - 1)z_1z_2 \\
				1 - \xi & \xi z_1 - z_2
			\end{pmatrix},
		\end{equation}
		for some primitive $n$-th root of unity $\xi \in F(z_1,z_2)$.  The element $s$ is defined over $F$ if and only if the non-trivial element $\sigma$ of $\Gal(F(z_1,z_2)/F)$ fixes $\frac{z_1 - \xi z_2}{1 - \xi}$ and $\frac{z_2 - \xi z_1}{1 - \xi}$.  By expanding the resulting equations and noting that $\sigma(z_1) = z_2$, we see that this happens if and only if $\xi\xi^{\sigma} = 1$, which completes the proof.
	\end{proof}

	Using the lemma,  we can detect $s$ as follows.  Let $\Lambda$ be the set of $\xi\in\overline{F}$ such that $\xi$ is a root of a quadratic factor of $C_i(X) :=  X^{d + i} - 1$ for $i\in\{-1,0,1\}$, or $\xi = -1$.  Loop over all Galois conjugate pairs $\{x_1,x_2\}\in Z_{1,2}^{(2)}$ and check which elements $\xi\in\Lambda$ generate the same field extension as $x_1$ and $x_2.$  (Since everything is quadratic, it is enough to check that the quotient of the discriminants of $x_i$ and $\xi$ is a square.)  For those $\xi$, we additionally check whether $u^{-1}\mat{\xi & \\ & 1} u$ is an automorphism of $\phi$. See the second for--loop of Algorithm~\ref{Alg: Fixed Point Method}. The same strategy also applies if we are in case (2) and $x_1$ and $x_2$ are quadratic conjugates. 

	Now assume that $s$ has a unique fixed point $x$.  Then $F$ is a field of characteristic $p>0$ and $s$ has order $p$.  (Move the unique fixed point to infinity.  Then $s$ is a nontrivial translation with finite order.)  Proposition~\ref{Prop: Geometric Bound} implies that $p = \textup{ord}(s) | d^3 - d$, which, together with our assumptions on $F$, forces $F$ to be finite.  Since $F$ is perfect, $x$ must be $F$-rational.  Now
	\[
	s(\phi(x)) = \phi(s(x)) = \phi(x), \text{ so that }\phi(x) = x.
	\]
	 Hence $x \in Z_{1,1}$. Choose $u \in \PGL_2(F)$ such that $u(x) = \infty$; then $u s u^{-1} = \mat{1 & \lambda \\ & 1}$ for some $\lambda \in F\smallsetminus\{0\}$. That is, $s \in u^{-1} \mat{1 & F\smallsetminus\{0\} \\ & 1}u$.  In order to find all elements of $\Aut_\phi(F)$ of order $p$, it suffices to apply this technique to every $x$ in the set $Z_{1,1}$.  See the last for--loop of Algorithm~\ref{Alg: Fixed Point Method}.  
	 
	\begin{remark}
		The final step is the only one where we use the assumption that $F$ is not infinite of characteristic $p$, where $p|d^3 - d$.   If $F$ is infinite of characteristic $p$ and $p|d^3 - d$, then one can simply omit the final step, and the algorithm will return the subset of $\Aut_{\phi}(F)$ consisting of all automorphisms with order different from $p$.  See also \S\ref{Sec: Large Fields of char p}.
	\end{remark}
	
	\begin{remark}
		This algorithm does not rely on the classification of finite subgroups of $\textup{PGL}_2$, so in principle it could be generalized to higher dimensions.  However, there are some practical difficulties to overcome; for example, the naive generalization would quickly become too cumbersome combinatorially due to the many possible ways that a morphism $\phi: \PP^n \to \PP^n$ could act on the $n+1$ fixed points of an automorphism.  It would be interesting to find an elegant way of controlling this combinatorial explosion and the other difficulties that arise.
	\end{remark}	
	
	\begin{algorithm}[h]
	\begin{flushleft}
	Input:  a field $F$ and $\phi \in F(z)$ of degree $\geq 2$ \\

	Output: the set $\Aut_\phi(F)$, if $F$ is finite or if $\textup{char}(F)\nmid d^3 - d$; \\
	\tab\tab\tab\tab the set $\{s\in\Aut_{\phi}(F): \ord(s) \ne p\}$, otherwise

	\medskip

	let $C_i(X) = X^{d + i} -1$ for $i = -1, 0, 1$\\
	create a list $T$ of $F$-rational roots of $C_i(X)$ \\
	create a list $\Lambda$ of roots of $F$-quadratic factors of $C_i(X)$ and $-1$\\

	\medskip

	create a list $L = [ z  ]$ \\
	create the sets $Z_{i,j}, W$ defined in equations~\eqref{Eq: Various sets} and~\eqref{Eq: One more set}
	\medskip

	for each pair $(x,y)$ with $x \neq y$ in $Z_{1,1}^{(2)} \cup Z_{2,1}^{(2)} \cup W$: \\
		\tab choose $u \in \PGL_2(F)$ such that $u(x) = \infty$ and $u(y) = 0$ \\
		\tab for $\zeta \in T \smallsetminus \{1\}$: \\
		\tab \tab set $s(z) = u^{-1}(\zeta u(z) )$ \\
		\tab \tab if $s \circ \phi = \phi \circ s$: append $s$ to $L$ \\

	\medskip

	for each pair of Galois conjugates $(x,y)$ in $Z_{1,2}^{(2)} \cup Z_{2,2}^{(2)}$: \\
		\tab choose $u \in \PGL_2(F(x,y))$ such that $u(x) = \infty$ and $u(y) = 0$ \\
		\tab for $\xi \in \Lambda$: \\
		\tab \tab set $s(z) = u^{-1}(\xi u(z) )$ \\
		\tab \tab if $s \circ \phi = \phi \circ s$: append $s$ to $L$ \\

	\medskip

	if $p := \textup{char}(F)|(d^3 - d)$ and $F$ is finite:\\
		\tab for $\lambda$ in $F^{\times}/\F_p^{\times}$:\\
		\tab \tab for $x \in Z_{1,1}$: \\
		\tab \tab \tab choose $u \in \PGL_2(F)$ such that $u(x) = \infty$ \\
		\tab \tab \tab set $s(z) = u^{-1} (u(z) + \lambda)$ \\
		\tab \tab \tab if $s \circ \phi = \phi \circ s$: append $s^i$ to $L$ for all $1\leq i< p$\\

	\medskip

	return $L$
	\end{flushleft}

	\caption{--- Computation of $\Aut_\phi(F)$ via the method of fixed points}
	\label{Alg: Fixed Point Method}
	\end{algorithm}

\subsection{Finding order $p$ automorphisms for large fields of 
characteristic $p$}\label{Sec: Large Fields of char p}

	Let $F$ be a field of characteristic $p$, and $\phi\in F(z)$ of degree $d>2$.  If $p|d^3 - d$ and $F$ is infinite, the last loop in Algorithm~\ref{Alg: Fixed Point Method} does not terminate; if $F$ is finite, but has cardinality much larger than the degree, then the last loop may be the dominant step.  In those cases, we can compute the order $p$ elements by a hybrid of the method of fixed points and the method of invariant sets.
	
	As we saw in the previous section, if $s\in\Aut_{\phi}(F)$ has order $p$, then $s$ has a unique fixed point $x$ which is either $F$-rational or generates an inseparable quadratic extension of $F$.  Since $s\circ\phi = \phi\circ s$, $x$ must also be a fixed point of $\phi$.  In addition, since $s$ permutes the fixed points of $\phi$, $\Fix(\phi)\smallsetminus\{x\}$ must break up into disjoint orbits of size $p$.  In particular $\#\Fix(\phi)\equiv 1\pmod{p}$.
	
	First consider the case when $x$ is $F$-rational; let $u\in\PGL_2(F)$ be such that $u(x) = \infty$, then $usu^{-1} = \mat{1 & \lambda\\0 & 1}$, where $\lambda \in F$.  If $\#\Fix(\phi) > 1$, let $y_1 \in \Fix(\phi)\smallsetminus \{x\}$, and set $y_2 := s(y_1)$.  Then $u(y_2) = u(y_1) + \lambda$, and in particular, $u(y_2) - u(y_1)\in F$.  So we may detect $s$ by looping over all $F$-rational fixed points $x$, choosing $y_1 \in\Fix(\phi)\smallsetminus\{x\}$, looping over all $y_2\in\Fix(\phi)(F(y_1))\smallsetminus \{x, y_1\}$ and testing (either by using a basis representation of $F(y_1)/F$ or by Galois descent) whether the element $u^{-1}\mat{1 & u(y_2) - u(y_1)\\0& 1}u\in \Aut_{\phi}(F)$.  If $\#\Fix(\phi) = 1$, then we may use the same argument with $y_1, y_2 \in \phi^{-1}(x) \smallsetminus \{x\}$ (by arguments in ~\S\ref{Sec: Invariant Sets}, $\#\phi^{-1}(x) > 1$). See Algorithm~\ref{Alg: Order p automorphisms}. 
	
	Now consider the case when $x$ generates an inseparable quadratic extension of $F$.  Then $F$ has characteristic $2$ and  $s = \mat{ \mu &x^2 \\ 1 & \mu}$, where $\mu\in F$.  If $\#\Fix(\phi)> 1$, then let $y_1\in\Fix(\phi)\smallsetminus\{x\}$ and set $y_2:= s(y_1)$; note that this implies that $y_2\in F(y_1)$.  Then we can solve for $\mu$ in terms of $y_1$ and $y_2$:
	\[
		\mu = 
			\begin{cases}
				y_2 & \textup{if } y_1 = \infty \\
				y_1 & \textup{if } y_2 = \infty \\
				\displaystyle \frac{y_1y_2 + x^2}{y_1 + y_2} & \textup{if } y_1, y_2 \neq \infty.
			\end{cases}
	\]
If $\#\Fix(\phi) = 1$, then we may use the same argument with $y_1, y_2 \in \phi^{-1}(x) \smallsetminus \{x\}$ instead.

	\begin{algorithm}[h]
	\begin{flushleft}
	Input:  a finite field $F$ of characteristic $p$ and $\phi \in F(z)$ of degree $d\geq 2$ \\

	Output: the subset of $\Aut_\phi(F)$ consisting of all order $p$ elements

	\medskip
	
	if $p\nmid d^3 - d$ or $\#\Fix(\phi)\not\equiv 1 \pmod{p}$ or ($\#\Fix(\phi) = 1$ and $\#\phi^{-1}(\Fix(\phi)) \not\equiv 1 \pmod{p}$):\\
	\tab return $\varnothing$\\
	
	\medskip
	create an empty list $L$ \\
	
	\medskip
	if $\# \Fix(\phi) > 1$: set $T = \Fix(\phi)$ \\
	else: set $T = \phi^{-1}(\Fix(\phi))$ \\
	
	\medskip
	
	for $x \in \Fix(\phi)(F)$:\\
	\tab choose $u\in\PGL_2(F)$ such that $u(x) = \infty$\\
	\tab choose $y_1\in T\smallsetminus\{x\}$\\
	\tab for $y_2\in T \smallsetminus\{x,y_1\}$, $y_2\in F(y_1)$:\\
	\tab\tab if $u(y_2) - u(y_1)\in F$ and $s(z):=u^{-1}(u(z) + u(y_2) - u(y_1))$ satisfies $s\circ\phi = \phi\circ s$:\\
	\tab\tab\tab append $s$ to $L$\\
	
	\medskip
	
	for $x\in\Fix(\phi)$ such that $F(x)/F$ is purely inseparable of degree $2$:\\
	\tab choose $y_1\in T \smallsetminus\{x\}$\\
	\tab for $y_2\in T \smallsetminus\{x,y_1\}$, $y_2\in F(y_1)$:\\
	\tab\tab set $s(z) := (\mu z + x^2)/(z + \mu)$, with $\mu$ as in \S\ref{Sec: Large Fields of char p}\\
	\tab\tab if $s\circ\phi = \phi\circ s$:\\
	\tab\tab\tab append $s$ to $L$\\
	
	\medskip

	return $L$
	\end{flushleft}

	\caption{--- Computation of the elements of order $p$ in $\Aut_\phi(F)$ 
	}
	\label{Alg: Order p automorphisms}
	\end{algorithm}

	\section{Examples}\label{Sec: Examples}

		In this section, we compute some examples to give an idea of the running times of the different algorithms over $\QQ$.  Since the method of fixed points from~\S\ref{Sec: AutAlgorithms} can only be applied to the computation of $\Aut_{\phi}(\QQ)$ and not $\Conj_{\phi,\psi}(\QQ)$, we restrict to computing automorphisms for comparison purposes. We write CRT, FP, and GB for the Chinese Remainder Theorem, fixed points, and Gr\"obner basis methods of computing $\Aut_\phi(\QQ)$, respectively. 
		
		First, we present some hand-selected examples with nontrivial automorphism group which demonstrate the correctness of the algorithm (Table~\ref{table:nontrivial_auts}).  As an approximation of ``random'' rational functions with non-trivial automorphism group, we compute the automorphism group of conjugates of $z^k$, where the conjugating functions were chosen randomly (Table~\ref{table:ConjugatesOfPowerMaps}).  Then we present median running times for randomly generated rational maps of varying degrees and varying heights (Table~\ref{table:fixed_points}).  In this last table, we did not include the running times of the Gr\"obner basis method when $d > 9$ since it is already apparent that this method was no longer competitive.  All of the randomly generated functions had trivial automorphism group.

		Our computations indicate that the fixed point method is faster for random rational functions of small degree, but that the CRT method is a better choice once the degree is larger than $12$.  In our implementation, the main bottleneck in the fixed point algorithm is in computing $Z_{1,2}$ and $Z_{2,2}$; this requires computing the quadratic factors of a degree $d^2 + 1$ polynomial. Implementing a faster method for finding quadratic factors of large degree polynomials  may render the fixed point method feasible for larger degrees.  As mentioned in~\S\ref{Sec: AutAlgorithms}, the CRT method becomes slower if there exists an automorphism with large height.  Thus, if one suspects that there will be a non-trivial automorphism, then it may be preferable to use the fixed point method even if the degree is large.  Interestingly, the height of the rational function seems to have little effect on the running times of the fixed point method and the CRT method, in stark contrast to the Gr\"obner basis method (Table~\ref{table:fixed_points}).
		
		These examples were computed on a Macbook Air (Apple, Inc.) running Mac OS X 10.7.2 with a 2.13 GHz Intel Core 2 Duo processor and 2GB of RAM. The fixed point method and CRT method were run with Sage 4.7.2 which was released on October 29, 2011.  The Gr\"obner basis method was run with \texttt{Magma} V2.17-1.  It is possible that the running time gap between our algorithms and the ``naive'' Gr\"obner basis algorithm is partly due to this difference is programs; however, the gap is so large that we believe it cannot possibly account for all of the difference. 
		
		All running times are listed in seconds.
		
		\vspace{.6in}
		
		{\small	\begin{table}[!h]\renewcommand{\arraystretch}{1.5}
			\begin{tabular}{|c|c|c|c|c|c|}
				\hline
				$\phi$ &  CRT & FP & GB & $\Aut_{\phi}(\Q)$ & group \\
				\hline
				$\frac{z^2 - 2z - 2}{-2z^2 - 2z + 1}$
					& 0.35
					& 0.05
					& 0.04
					& $z^{\pm1}, \left(\frac{-z}{z + 1}\right)^{\pm1}, (-z - 1)^{\pm1}$ 
					& $\mathfrak{D}_6$ \\
				\hline
				$\frac{z^2 + 2z}{-2z - 1}$ 
					& 0.09
					& 0.03
					& 0.01 
					& $z, \frac{-z}{z + 1}, \frac{1}{z},-z - 1, \frac{-z - 1}z, 
						\frac{-1}{z + 1}$ 
					& $\mathfrak{D}_6$ \\
				\hline
				$\frac{z^2 - 4z - 3}{-3z^2 - 2z + 2}$ 
					& 0.07
					& 0.02
					& 0.02
					& $z, \frac{-z - 1}z, \frac{-1}{z + 1}$
					& $\mathfrak{C}_3$ \\
				\hline
				\multirow{2}{*}{$\frac{z^5 + 5z^4 - 20z^3 + 10z^2 + 5z - 2}{
					2z^5 - 5z^4 - 10z^3 + 20z^2 - 5z - 1}$} 
					& \multirow{2}{*}{0.49} 
					& \multirow{2}{*}{0.06}
					& \multirow{2}{*}{0.13}
					& $z, -z + 1, \frac1z, \frac{z}{z - 1},\frac{2z - 1}{z - 2},
					\frac{-z + 2}{z + 1},$
					& \multirow{2}{*}{$\mathfrak{D}_{12}$} \\
					& & & & $\frac{z + 1}{2z - 1},\frac{z - 2}{2z - 1}, 
					\frac{-1}{z - 1}, 
					\frac{z - 1}z,\frac{-z - 1}{z - 2}, \frac{2z - 1}{z + 1}$ 
					& \\
				\hline
				\multirow{2}{*}
				{$\frac{z^5 - 5z^4 + 10z^2 - 5z}{-5z^4 + 10z^3 - 5z + 1}$} 
					& \multirow{2}{*}{2.36} 
					& \multirow{2}{*}{0.15}
					& \multirow{2}{*}{0.15} 					
					& $z, \frac{z}{z - 1}, -z + 1, \frac{1}z, \frac{2z - 1}{z - 2}, 
					\frac{-z + 2}{z + 1},$
					& \multirow{2}{*}{$\mathfrak{D}_{12}$} \\
					& & & &  $\frac{z - 2}{2z - 1}, \frac{z + 1}{2z - 1}, 
					\frac{-1}{z - 1}, 
					\frac{z - 1}z, \frac{-z - 1}{z - 2}, \frac{2z - 1}{z + 1}$
					& \\
				\hline
				$\frac{z^5 - 20z^4 + 30z^3 + 10z^2 - 20z + 3}{
					-3z^5 - 5z^4 + 40z^3 - 30z^2 - 5z + 4}$ 
					& 0.63
					& 0.03
					& 0.16
					& $z,\frac{z - 2}{2z - 1}, \frac{-1}{z - 1},\frac{z - 1}z, 
					\frac{-z - 1}{z - 2},\frac{2z - 1}{z + 1}$
					& $\mathfrak{C}_6$\\
				\hline
				$\frac{3z^2 - 1}{z^3 - 3z}$ 
					& 0.20
					& 0.04
					& 0.02
					& $\pm z, \pm\frac1z, 
					\pm\left(\frac{-z + 1}{z + 1}\right), 
					\pm\left(\frac{z + 1}{z - 1}\right)$
					& $\mathfrak{D}_8$ \\
				\hline
				$\frac{z^3 - 3z}{-3z^2 + 1}$ 
					& 0.23
					& 0.03
					& 0.02
					& $\pm z, \pm\frac1z, 
					\pm\left(\frac{-z + 1}{z + 1}\right), 
					\pm\left(\frac{z + 1}{z - 1}\right)$
					& $\mathfrak{D}_8$ \\
				\hline
				$\frac{z^3 - 21z^2 - 3z + 7}{-7z^3 - 3z^2 + 21z + 1}$ 
					& 0.38
					& 0.02
					& 0.03
					& $z, \frac{-1}z,\frac{z - 1}{z + 1},\frac{-z - 1}{z - 1}$
					& $\mathfrak{C}_4$\\
				\hline
				$\frac{z^{11} + 66z^6 - 11z}{-11z^{10} - 66z^5 + 1}$ 
					& 0.40
					& 0.06
					& 0.65
					& $z, -1/z$
					& $\mathfrak{C}_2$\\
				\hline 
				$345025251z^6$ 
					& 300.63
					& 0.02
					& 0.07
					& $z, 1/({2601z})$
					&$\mathfrak{C}_2$\\
				\hline
			\end{tabular}
			\caption{Running times for {\texttt{automorphism\_group\_QQ}} on rational functions with nontrivial automorphism group. \label{table:nontrivial_auts}}
		\end{table}}		
		\vspace{.2in}
			
		{
		\begin{table}[!ht]
			\renewcommand{\arraystretch}{1.5}
			\begin{tabular}{|c|c|c|c|c|c|}
				\hline
				$k$ & $f$ & CRT & FP & GB & $\Aut_{\phi^f}(\Q)$  \\
				\hline
				3 & $\frac{3z - 7}{5z - 1}$ 
					& 6.19 & 0.05 & 0.01 &
					$z, \frac{z + 5}{3z - 1},
					\frac{-19z + 21}{-5z + 19}, 
					\frac{11z - 29}{13z - 11}$\\
					\hline
				-3 & $\frac{-3z}{-3z - 4}$ 
					& 9.94 & 0.03 & 0.03 &
					$z, \frac{z}{2z - 1},\frac{-9z + 9}{7z + 9}, 
					\frac{-9z + 9}{-25z + 9}$\\ \hline
				6 & $\frac{7z + 10}{-3z + 8}$ 
					& 3. 88 & 0.04 & 0.29 &
					$z,\frac{101z - 51}{55z - 101}$\\ \hline
				-6 & $\frac{-7z - 7}{-3z + 1}$
					& 0.12 & 0.03 & 0.42 
					& $z, \frac{7z}{4z - 7}$\\ \hline
				9 & $\frac{z - 8}{4z - 10}$ & 
					90.27 & 0.17 & 3.31 &	
					$z,\frac{84z - 65}{116z - 84}, 
					\frac{-21z + 8}{-40z + 21},
					\frac{-76z + 63}{-84z + 76}$ \\ \hline
				-9 & $\frac{8z + 1}{-2z + 9}$ 
					& 101.20 & 0.17 & 19.25 &
					$z,\frac{25z + 63}{77z - 25}, 
					\frac{-35z + 8}{18z + 35},
					\frac{7z + 65}{-85z - 7}$ \\ \hline
				12 & $\frac{-2z - 10}{4z + 1}$
					& 0.80 & 0.28 & 29.96 & 
					$z,\frac{-2z - 96}{-15z + 2}$\\ \hline
				-12 & $\frac{-3z}{-5z + 1}$ 
					& 0.29 & 0.26 & 22.98
					& $z,\frac{5z - 3}{8z - 5}$\\ \hline
				15 & $\frac{z + 9}{-z + 1}$ 
					& 16.66 & 0.65 & 100.93 &
					$z, -z + 8,\frac{4z + 9}{z - 4},
					\frac{4z - 41}{z - 4}$\\ \hline
				-15 & $\frac{-4z - 1}{8z - 8}$ 
					& 32.68 & 2.39 & 371.42 &
					$z, -z - \frac38,\frac{-3z + 1}{16z + 3}, 
					\frac{-24z - 17}{128z + 24}$\\ \hline
				18 & $\frac{z + 10}{5z + 10}$ 
					& 4.42 & 2.87 & 739.89 &
					$z,\frac{95z - 99}{75z - 95}$\\ \hline
				-18 & $\frac13(2z - 5)$ 
					& 0.97 & 1.42 & 4.23 &
					$z,\frac{-5z - 7}{3z + 5}$\\ \hline
			\end{tabular}
			\caption{Running times for {\texttt{automorphism\_group\_QQ}} on $\phi^f$ where $\phi(z) = z^k$. Automorphism groups are either $\Z/2$ or $\Z/2\times\Z/2$.  
\label{table:ConjugatesOfPowerMaps}}
		\end{table}}
		\begin{table}[!hb]
			{\small
			\begin{tabular}{|c|c|c|c|c|c|c|c|c|}
				\hline
				 \multirow{2}{*}{d} & & \multicolumn{6}{|c|}{Height Bound}\\
				& &  \multicolumn{1}{c}{$50$} &\multicolumn{1}{c} {$10^2$} 
				& \multicolumn{1}{c}{$10^3$} & \multicolumn{1}{c}{$10^4$} 
				& \multicolumn{1}{c}{$10^5$} & \multicolumn{1}{c|}{$10^6$}\\
				\hline
				 3 & CRT 
					& 0.057
					& 0.094
					& 0.101
					& 0.102
					& 0.093
					& 0.103 \\
				   &  FP
					& 0.011
					& 0.012
					& 0.011
					& 0.010
					& 0.010
					& 0.010 \\
				   &  GB
					& 0.010
					& 0.010
					& 0.020
					& 0.020
					& 0.030
					& 0.030\\
					\hline
				 6 & CRT
					& 0.083
					& 0.075
					& 0.067
					& 0.121
					& 0.109
					& 0.109 \\
				   &  FP
					& 0.021
					& 0.022
					& 0.023
					& 0.024
					& 0.024
					& 0.025 \\
				   &  GB
					& 0.660
					& 0.895
					& 1.325
					& 1.910
					& 2.495
					& 3.320 \\
					\hline
				 9 & CRT
					& 0.108
					& 0.177
					& 0.148
					& 0.164
					& 0.138
					& 0.156 \\
				   &  FP
					& 0.069
					& 0.072
					& 0.073
					& 0.080
					& 0.082
					& 0.088 \\
				   &  GB
					& 4.550
					& 5.015
					& 7.235
					& 10.115
					& 11.735
					& 13.800 \\
					\hline
				12 & CRT
					& 0.162
					& 0.258
					& 0.278
					& 0.263
					& 0.264
					& 0.301 \\
				   & FP
					& 0.234
					& 0.230
					& 0.247
					& 0.260
					& 0.280
					& 0.307 \\
					\hline
				15 & CRT
					& 0.456
					& 0.500
					& 0.479
					& 0.496
					& 0.454
					& 0.436 \\
				   & FP
					& 0.788
					& 0.789
					& 0.776
					& 0.772
					& 0.793
					& 0.824 \\
					\hline
				18 & CRT
					& 0.973
					& 0.985
					& 0.954
					& 0.997
					& 1.026
					& 1.105 \\
				   & FP
					& 1.913
					& 1.884
					& 2.008
					& 2.160
					& 2.188
					& 2.322 \\
					\hline
				21 & CRT 
					& 2.419
					& 2.437
					& 2.634
					& 2.285
					& 2.433
					& 2.357\\
				   & FP
					& 5.112
					& 5.106
					& 5.377
					& 5.399
					& 5.226
					& 5.078\\
				\hline
				\end{tabular}}
			\caption{Median running times for the three algorithms on $100$ random rational functions with given degree and height bound.\label{table:fixed_points}}
		\end{table}


\bibliographystyle{amsalpha}
\bibliography{xander_bib}

\end{document}